\newtheorem{theorem}{Theorem}
\newtheorem{lemma}[theorem]{Lemma}
\newtheorem{question}[theorem]{Question}
\newcounter{tbox}
\newcommand{\clm}[1]{\vspace{0.1cm}\medskip\refstepcounter{tbox}\noindent{\parbox{\textwidth}{(\thetbox) \emph{#1}}}\vspace{0.2cm}}
\setlist[itemize]{leftmargin=5.5mm}
\newcommand*{\myproofname}{Proof}
\newenvironment{claimproof}[1][\myproofname]{\begin{proof}[#1]}{\end{proof}}
\definecolor{Lavender}{rgb}{0.9, 0.9, 0.98}
\tikzset{
  cir/.style = {circle,draw,fill,inner sep=.7pt},
  circ/.style = {circle,draw,fill,inner sep=1.3pt},
  circb/.style = {circle,draw,inner sep=1.7pt},
  circg/.style = {circle,draw=gray,fill=gray,inner sep=1.3pt},
  circr/.style = {circle,draw=Crimson,fill=Crimson,inner sep=1.3pt},
  invisible/.style = {circle,draw=none,inner sep=0pt,font=\tiny},
  nonedge/.style={decorate,decoration={snake,amplitude=.3mm,segment length=6mm}},
}
\newtcolorbox{mybox}[1]{minipage boxed title*=-2cm,
enhanced,attach boxed title to top center=
{yshift=-3mm,yshifttext=-1mm},colback=Lavender!30!white,
boxed title style={size=small,colback=Lavender},coltitle=black,
center title,title={#1}}
\title[Maximum List $r$-Colorable Induced Subgraphs in $kP_3$-free Graphs]{Maximum List $r$-Colorable Induced Subgraphs in $kP_3$-free Graphs}
\author{Esther Galby$^{\dagger}$}
\address{$\dagger$ Chalmers University of Technology, Sweden.}
\author{Paloma T. Lima$^{\ddagger}$}
\address{$\ddagger$ IT University of Copenhagen, Denmark.}
\author{Andrea Munaro$^{\mathsection}$}
\author{Amir Nikabadi$^{\ddagger}$}
\address{$\mathsection$ University of Parma, Italy.}
\address{Lima and Nikabadi acknowledge the support of the Independent Research Fund Denmark grant agreement number 2098-00012B.}
\begin{document}
\maketitle
\begin{abstract}
We show that, for every fixed positive integers $r$ and $k$, \textsc{Max-Weight List $r$-Colorable Induced Subgraph} admits a polynomial-time algorithm on $kP_3$-free graphs. This problem is a common generalization of \textsc{Max-Weight Independent Set}, \textsc{Odd Cycle Transversal} and \textsc{List $r$-Coloring}, among others. Our result has several consequences. 

First, it implies that, for every fixed $r \geq 5$, assuming $\mathsf{P}\neq \mathsf{NP}$, \textsc{Max-Weight List $r$-Colorable Induced Subgraph} is polynomial-time solvable on $H$-free graphs if and only if $H$ is an induced subgraph of either $kP_3$ or $P_5+kP_1$, for some $k \geq 1$. Second, it makes considerable progress toward a complexity dichotomy for \textsc{Odd Cycle Transversal} on $H$-free graphs, allowing to answer a question of Agrawal, Lima, Lokshtanov, Rz{\k{a}}{\.z}ewski, Saurabh, and Sharma [TALG 2024]. Third, it gives a short and self-contained proof of the known result of Chudnovsky, Hajebi, and Spirkl [Combinatorica 2024] that \textsc{List $r$-Coloring} on $kP_3$-free graphs is polynomial-time solvable for every fixed $r$ and $k$.

We also consider two natural distance-$d$ generalizations of  
\textsc{Max-Weight Independent Set} and \textsc{List $r$-Coloring} and provide polynomial-time algorithms on $kP_3$-free graphs for every fixed integers $r$, $k$, and $d \geq 6$.
\end{abstract}


\section{Introduction}\label{sec:intro}
A fundamental class of graph optimization problems consists in finding a maximum-weight induced subgraph satisfying a certain fixed property $\Pi$. Lewis and Yannakakis~\cite{LY80} showed that, whenever this fixed property $\Pi$ is nontrivial and hereditary\footnote{A graph property $\Pi$ is \textit{nontrivial} if it is true for infinitely many graphs and false for infinitely many graphs. It is \textit{hereditary} if, whenever a graph satisfies $\Pi$, all its induced subgraphs satisfy $\Pi$ as well.}, the corresponding problem is $\mathsf{NP}$-hard. In this paper, we investigate the case where $\Pi$ is the property of being list $r$-colorable, leading to a meta-problem called \textsc{Max-Weight List $r$-Colorable Induced Subgraph}. In order to properly define this problem, we first require some definitions. 

Let $G = (V, E)$ be a finite simple graph. A \textit{coloring} of $G$ is a mapping $\phi \colon V\rightarrow \{1, 2, \ldots\}$ that gives each vertex $u \in V$ a \textit{color} $\phi(u)$ in such a way that, for every two adjacent vertices $u$ and $v$ in $G$, we have that $\phi(u) \neq \phi(v)$. For $r \geq 1$, a coloring $\phi$ of $G$ is an \textit{$r$-coloring} if $\phi(u) \in \{1, \ldots, r\}$ for every $u \in V$, and a graph is \textit{$r$-colorable} if it admits an $r$-coloring. For $r \geq 1$, an \textit{$r$-list assignment} of $G$ is a function $L\colon V \rightarrow 2^{\{1,\ldots,r\}}$ that assigns each vertex $u \in V$ a \textit{list} $L(u) \subseteq \{1,\ldots,r\}$ of admissible colors for $u$. A coloring $\phi$ of $G$ {\it respects} $L$ if  $\phi(u)\in L(u)$ for every $u\in V$. We are finally ready to define \textsc{Max-Weight List $r$-Colorable Induced Subgraph}, where $r$ is a fixed positive integer.

\begin{mybox}{\textsc{Max-Weight List $r$-Colorable Induced Subgraph}}
\textbf{Input:} A graph $G$ equipped with a weight function $w\colon V(G) \rightarrow \mathbb{Q}_{+}$, and an $r$-list assignment $L$ of $G$.\\
\textbf{Task:} Find a subset $F \subseteq V(G)$ such that:
\begin{enumerate}
\item The induced subgraph $G[F]$ admits a coloring that respects $L$, and
\item The weight $w(F) = \sum_{v \in F}w(v)$ is maximum subject to the condition above.
\end{enumerate}
\end{mybox}

\textsc{Max-Weight List $r$-Colorable Induced Subgraph} is a common generalization of several well-known and deeply investigated $\mathsf{NP}$-hard problems, as we explain next. \textsc{Max-Weight List $r$-Colorable Induced Subgraph} generalizes \textsc{List $r$-Coloring} and hence \textsc{$r$-Coloring} as well, which are known to be \textsf{NP}-hard for all $r > 2$ \cite{K72}. Recall that, for a fixed $r \geq 1$, \textsc{List $r$-Coloring} is the problem to decide whether a given graph $G$ with an $r$-list assignment $L$ admits a coloring that respects $L$. By setting $L(u)=\{1,\ldots,r\}$ for every $u\in V(G)$, we obtain {\sc $r$-Coloring}. Note also that, for $r_1 \leq r_2$, \textsc{List $r_1$-Coloring} is a special case of \textsc{List $r_2$-Coloring}.

Several other \textsf{NP}-hard problems are special cases of \textsc{Max-Weight List $r$-Colorable Induced Subgraph} for specific values of $r$. For example, for $r=1$, \textsc{Max-Weight List $r$-Colorable Induced Subgraph} is equivalent to \textsc{Max-Weight Independent Set}, which is the problem of finding a maximum-weight subset of pairwise non-adjacent vertices of an input graph $G$. 
For $r=2$, \textsc{Max-Weight List $r$-Colorable Induced Subgraph} generalizes the problem of finding a maximum-weight induced bipartite subgraph of an input graph $G$ which, by complementation, is equivalent to finding a minimum-weight subset of vertices intersecting all odd cycles in $G$. The latter problem is the well-known \textsc{Odd Cycle Transversal}. 

Given the hardness of \textsc{Max-Weight List $r$-Colorable Induced Subgraph}, it is natural to investigate whether the problem becomes tractable for restricted classes of inputs. The framework of hereditary graph classes (i.e., graph classes closed under vertex deletion) is particularly well suited for this type of research, where the ultimate goal is to obtain complexity dichotomies telling us for which hereditary graph classes the problem at hand can or cannot be solved efficiently (under the standard complexity assumption that $\mathsf{P} \neq \mathsf{NP}$). 

We recall some relevant definitions. A graph $G$ is \textit{$H$-free}, for some graph $H$, if it contains no induced subgraph isomorphic to $H$, that is, we cannot modify $G$ into $H$ by a sequence of vertex deletions. For a set of graphs $\{H_1, \ldots, H_p\}$, a graph is \textit{$(H_1, \ldots, H_p)$-free} if it is $H_i$-free for every $i \in \{1, \ldots, p\}$.
The \textit{disjoint union} $G + H$ of graphs $G$ and $H$ is the graph with vertex set $V(G) \cup V(H)$ and edge set $E(G) \cup E(H)$. We denote the disjoint union of $k$ copies of $G$ by $kG$ and let $P_s$ denote the chordless path on $s$ vertices. 

It is known that, for $r = 2$, \textsc{Max-Weight List $r$-Colorable Induced Subgraph} admits no polynomial-time algorithm on $H$-free graphs unless $H$ is a \textit{linear forest} (i.e., a disjoint union of paths). Indeed, Chiarelli et al.~\cite{chiarelli2018minimum} showed that its special case \textsc{Odd Cycle Transversal} is \textsf{NP}-hard on $H$-free graphs if $H$ contains a cycle or a claw (the claw is the $4$-vertex star). In recent years, considerable work has been done toward classifying the complexity of \textsc{Odd Cycle Transversal} (and its generalizations) on graphs forbidding an induced linear forest.
In \autoref{thm:literature-oct}, we collect known results for \textsc{Odd Cycle Transversal} and its two generalizations  \textsc{Max-Weight $r$-Colorable Induced Subgraph} and \textsc{Max-Weight List $r$-Colorable Induced Subgraph} on $H$-free graphs, where $H$ is a linear forest. The problems are listed in increasing order of generality. In particular, an $\mathsf{NP}$-hardness result for a certain problem implies $\mathsf{NP}$-hardness for a more general problem. Note also that the hardness results hold even in the unweighted setting.

\begin{theorem}\label{thm:literature-oct} The following hold:

\begin{enumerate}[{\rm}(i)]\setlength\itemsep{0.3em}
\item \textsc{Odd Cycle Transversal} on $H$-free graphs can be solved in polynomial time if
	\begin{itemize}
		\item $H = P_5$~\textup{(Agrawal et al.~\cite{agrawal2024odd})}, or 
		\item $H = kP_2$ for all $k\in \mathbb{N}$~\textup{(Chiarelli et al.~\cite{chiarelli2018minimum})}, or 
        \item $H = P_3 + kP_1$ for all $k\in \mathbb{N}$~\textup{(Dabrowski et al.~\cite{dabrowski2020cycle})},
	\end{itemize}
	and remains \textsf{NP}-hard if
	\begin{itemize}
		\item $H = (P_6, P_5+P_2)$~\textup{(Dabrowski et al.~\cite{dabrowski2020cycle})}.
	\end{itemize}
\item \textsc{Max-Weight $r$-Colorable Induced Subgraph} on $H$-free graphs can be solved in polynomial time if
\begin{itemize}
		\item $H = P_5 + kP_1$ for all $k\in \mathbb{N}$~\textup{(Henderson et al.~\cite{henderson2024maximum})},
\end{itemize}
	and remains \textsf{NP}-hard if
	\begin{itemize}
		\item $H = (P_6, P_5+P_2)$ for $r=2$, or
		\item $H = 2P_4$ for all $r\geq5$ \textup{(Hajebi et al.~\cite{hajebi2022complexity})}.
	\end{itemize}
\item \textsc{Max-Weight List $r$-Colorable Induced Subgraph} on $H$-free graphs can be solved in polynomial time if
\begin{itemize}
		\item $H = P_5$ \textup{(Lokshtanov et al.~\cite{lokshtanov2024maximum})},
            \item $H = P_5 + kP_1$ for all $k\in \mathbb{N}$~\textup{(Henderson et al.~\cite{henderson2024maximum})},
\end{itemize}
	and remains \textsf{NP}-hard if
	\begin{itemize}
		\item $H = (P_6, P_5+P_2)$ for $r\geq2$, or
		\item $H = P_4+P_2$ for all $r\geq 5$ \textup{(Couturier et al.~\cite{couturier2015list})}.
	\end{itemize}
\end{enumerate}
\end{theorem}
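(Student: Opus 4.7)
The statement is a compilation of existing results from the literature, so my plan is to verify each bullet point against its cited reference rather than develop a fresh argument. I would begin by organising the twelve items into tractability results, where each cited paper supplies a polynomial-time algorithm on the stated class, and hardness results, where each cited paper provides a reduction producing instances within the stated class.

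The key simplification comes from the generality hierarchy made explicit in the paragraph preceding the statement: \textsc{Odd Cycle Transversal} is a special case of \textsc{Max-Weight $r$-Colorable Induced Subgraph} at $r=2$ (by complementation of the induced bipartite subgraph), and both are special cases of \textsc{Max-Weight List $r$-Colorable Induced Subgraph}. Consequently, polynomial-time algorithms for a more general problem yield tractability for a more specific one, while $\mathsf{NP}$-hardness propagates in the opposite direction. I would use this observation to identify the minimal set of references that one actually needs to inspect directly; the remaining bullets follow by propagation, and the repeated citations merely record which paper produced the tightest version on each class. For instance, the result of Lokshtanov et al.\ on $P_5$-free graphs at the list-colouring level already implies tractability of \textsc{Odd Cycle Transversal} on $P_5$-free graphs, while the hardness of Dabrowski et al.\ on $(P_6,P_5+P_2)$-free graphs for \textsc{Odd Cycle Transversal} propagates upward to both more general problems.

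The remaining work is to inspect each direct reference. For the tractability results I would confirm that the algorithm in the cited paper handles the weighted and list-assignment settings where required, and that its running time is polynomial in $|V(G)|$ with $r$ and $k$ treated as constants. For the hardness results I would check that the reduction produces graphs genuinely avoiding the stated induced subgraphs and that the source problem is known to be $\mathsf{NP}$-hard at the claimed value of $r$. The step most likely to require real attention is the $r\geq 5$ hardness on $2P_4$-free graphs (Hajebi et al.) and on $(P_4+P_2)$-free graphs (Couturier et al.): both reductions start from restricted colouring problems, and verifying that the constructed graphs lie in the stated class requires a structural analysis of the gadgets. Apart from this, the proof reduces to a bibliographic check.
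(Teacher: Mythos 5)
Your proposal matches the paper's treatment exactly: Theorem~1 is stated there without a standalone proof, resting on the cited references together with the generality hierarchy (\textsc{Odd Cycle Transversal} $\subseteq$ \textsc{Max-Weight $r$-Colorable Induced Subgraph} $\subseteq$ \textsc{Max-Weight List $r$-Colorable Induced Subgraph}) that the paper spells out in the surrounding discussion, including the remark that hardness propagates upward and that the hardness results hold even unweighted. Your bibliographic-verification plan, with the hierarchy used to minimize the set of references needing direct inspection, is precisely the intended justification.
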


Recently, Chudnovsky et al.~\cite{CHS24} obtained the following complete complexity dichotomy for \textsc{List $r$-Coloring} when $r \geq 5$. Assuming $\mathsf{P}\neq\mathsf{NP}$, \textsc{List $r$-Coloring} ($r \geq 5$) can be solved in polynomial time on $H$-free graphs if and only if $H$ is an induced subgraph of either $kP_3$ or $P_5 + kP_1$, for some $k \in \mathbb{N}$. Their main result toward this was showing that \textsc{List $r$-Coloring} ($r \geq 1$) can be solved in polynomial time on $kP_3$-free graphs, for any $k \in \mathbb{N}$, and this was obtained building on a very technical result of Hajebi et al.~\cite[Theorem~5.1]{hajebi2022complexity}.   

Motivated by the quest for a complexity dichotomy, Agrawal et al.~\cite{agrawal2024odd} posed very recently as an open problem to classify the computational complexity of \textsc{Odd Cycle Transversal} on $(P_3+P_2)$-free graphs, the unique minimal open case stemming from \Cref{thm:literature-oct}.

It should also be mentioned that classifying the complexity of \textsc{Max-Weight Independent Set} on $H$-free graphs when $H$ is a linear forest is a notorious open problem in algorithmic graph theory (see \cite{CMPPR24} for the state of the art). Note however that \textsc{Max-Weight Independent Set} substantially differs from \textsc{Odd Cycle Transversal} on $H$-free graphs, in the sense that it is polynomial-time solvable on claw-free graphs.

In this paper, we also consider the distance-$d$ generalizations of \textsc{Max-Weight Independent Set} and \textsc{List $r$-Coloring}, defined as follows. For $d \geq 2$, a \textit{distance-$d$ independent set} of a graph $G$ is a set of vertices of $G$ pairwise at distance at least $d$ in $G$. For fixed $d \geq 2$, \textsc{Max-Weight Distance-$d$ Independent Set} (also known as $d$-\textsc{Scattered Set}) is the problem to find a maximum-weight distance-$d$ independent set of an input graph $G$.

Trivially, for every fixed $d \geq 2$, \textsc{Max-Weight Distance-$d$ Independent Set} is easy on $P_{d+1}$-free graphs, and the following hardness results are known.

\begin{theorem}\label{eto} 
The following hold for \textsc{Distance-$d$ Independent Set} on $H$-free graphs:
\begin{enumerate}[{\rm}(i)]\setlength\itemsep{0.3em}
\item The problem is $\mathsf{NP}$-hard if $H$ contains $2P_{(d+1)/2}$, for every fixed odd $d \geq 3$ \textup{(Eto et al.~\cite[Theorem~4]{EGM14})}; 
\item Assuming ETH, the problem admits no $2^{o(|V(H)|+|E(H)|)}$-time algorithm if $H$ contains a cycle or a claw, for every fixed $d \geq 3$ \textup{(Bacs{\'{o}} et al.~\cite{BLMPTL19})}.
\end{enumerate}   
\end{theorem}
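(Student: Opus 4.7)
The statement collects two previously published lower bounds, so my plan is to describe how I would independently reconstruct each of the two reductions. For part (i), I would give a polynomial-time reduction from \textsc{Independent Set} on general graphs to \textsc{Distance-$d$ Independent Set} on $2P_{(d+1)/2}$-free graphs. Given an input graph $G$, construct $G'$ by replacing each edge $uv$ of $G$ with an induced path of length exactly $d-1$ (so $d$ is odd forces $(d-1)/2$ new internal vertices on each side of a midpoint). Two originally adjacent vertices of $G$ then sit at distance $d-1 < d$ in $G'$ while independent vertices lie at distance at least $d$, which establishes a bijection between maximum independent sets of $G$ and maximum distance-$d$ independent sets of $G'$ (after observing that subdivision vertices can be traded for endpoints without decreasing the size). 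The structural step, which is the heart of the proof, is to check that $G'$ is $2P_{(d+1)/2}$-free: any induced $P_{(d+1)/2}$ in $G'$ must be entirely contained in one subdivided edge together with at most one of its endpoints, and two vertex-disjoint such induced paths cannot coexist without forcing a shared endpoint in $G$. The exponent $(d+1)/2$ is exactly tuned so that one half of a subdivision gadget is just long enough to host a $P_{(d+1)/2}$ but two disjoint halves cannot.

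For part (ii), I would derive the ETH-tight lower bound via a reduction from \textsc{$3$-SAT} combined with the sparsification lemma. Two sub-cases are handled by shaping the target graph class. If $H$ contains a claw, I target graphs of maximum degree at most $2$ (disjoint unions of paths and cycles), which are automatically claw-free; the standard variable/clause gadget for \textsc{Distance-$d$ Independent Set} can be implemented on this restricted topology with a linear blow-up. If $H$ contains a cycle of length $\ell$, I instead use a girth-boosting construction (iterated subdivision with controlled diameter), producing a graph of girth strictly greater than $\ell$ and hence $H$-free. Sparsification guarantees a $2^{o(n+m)}$ lower bound for \textsc{$3$-SAT} with $n$ variables and $m$ clauses, and since the reduction is linear in size, one obtains the desired $2^{o(|V(H)|+|E(H)|)}$ lower bound after identifying $|V(H)|+|E(H)|$ with the appropriate linear parameter of the instance.

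The main obstacle in (i) is the $2P_{(d+1)/2}$-freeness verification, which forces a careful case analysis on how two vertex-disjoint induced paths of length $(d-1)/2$ can interact with the subdivision gadgets; this is where the particular value $(d+1)/2$ becomes critical, and any coarser parameter would fail. The main obstacle in (ii) is not the $\mathsf{NP}$-hardness, which is essentially classical, but producing a reduction of sufficiently controlled size to match the fine-grained exponent $|V(H)|+|E(H)|$ rather than a weaker polynomial dependence; the delicate point is keeping the girth or degree constraints intact while blowing up the instance by only a linear factor.
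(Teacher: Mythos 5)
First, note that the paper does not prove this theorem at all: it is a pair of results quoted from the literature (Eto et al.\ and Bacs\'o et al.), so your proposal can only be judged against the known constructions --- and both of your reconstructions have genuine flaws. For part (i), the edge-subdivision construction fails on both counts. Internal vertices of distinct subdivided edges are pairwise non-adjacent, and each subdivided edge carries $d-2$ internal vertices with $d-2 \geq (d+1)/2$ for every odd $d \geq 5$; hence as soon as $G$ contains two disjoint edges with no edge between their endpoints, $G'$ contains an induced $2P_{(d+1)/2}$ (for $d=3$, two half-edges of two such edges already give an induced $2P_2$). So your key structural claim --- that every induced $P_{(d+1)/2}$ lies inside one subdivided edge plus at most one endpoint, and that two disjoint such paths force a shared endpoint --- is false, and $G'$ is not $2P_{(d+1)/2}$-free. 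The size correspondence also breaks: the midpoints of the subdivided edges of any matching of $G$ are pairwise at distance at least $2(d-1) \geq d$ in $G'$, so $G'$ admits distance-$d$ independent sets of size $\nu(G)$; for $G = K_n$ this is $\lfloor n/2 \rfloor$ while $\alpha(G) = 1$, so no bijection (or even monotone correspondence) with maximum independent sets of $G$ exists. The construction of Eto et al.\ has a different shape precisely to avoid this: a dominating clique with one vertex per edge of $G$, and a pendant path with $(d-1)/2$ vertices attached for each vertex of $G$. Then adjacent vertices of $G$ correspond to path-ends at distance $d-1$ (via a common clique neighbor), non-adjacent ones to distance $d$, and --- the crucial point --- every induced $P_{(d+1)/2}$ must meet the clique, so any two of them are joined by an edge, which is what yields $2P_{(d+1)/2}$-freeness (and chordality). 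Without routing all long induced paths through a clique, no freeness argument of this kind can succeed.

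Part (ii) has an outright error: graphs of maximum degree at most $2$ are disjoint unions of paths and cycles, on which \textsc{Distance-$d$ Independent Set} is solvable in polynomial time by elementary dynamic programming, so no hardness reduction can target that class; claw-freeness must instead be achieved on a rich claw-free class such as line graphs, where distance-$d$ independent sets for $d \geq 3$ encode induced-matching-type problems that are genuinely hard. In the cycle case, your girth-boosting subdivision changes all distances in the instance, and therefore changes which sets are distance-$d$ independent for the \emph{same} fixed $d$; making the gadgets survive subdivision is the actual technical content of the cited result, not a routine afterthought. Finally, observe that for fixed $H$ the bound $2^{o(|V(H)|+|E(H)|)}$ as printed is constant and thus vacuous --- the exponent is evidently meant to measure the input graph (the standard ETH bound $2^{o(n+m)}$) --- and your closing attempt to ``identify $|V(H)|+|E(H)|$ with the appropriate linear parameter of the instance'' conflates the fixed forbidden graph $H$ with the input, which signals the same confusion rather than resolving it.
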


For $d \geq 2$, a \textit{$(d,r)$-coloring} of a graph $G$ is an assignment of colors to the vertices of $G$ using at most $r$ colors such that no two distinct vertices at distance less than $d$ receive the same color. Thus, a $(2,r)$-coloring is nothing but an $r$-coloring. Similarly as above, for fixed $d \geq 2$ and $r \geq 1$, we define  \textsc{$(d,r)$-Coloring} as the problem of determining whether a given graph $G$ has a $(d, r)$-coloring. \textsc{List $(d,r)$-Coloring} is defined similarly but we require in addition that every vertex $u$ must receive a color from some given list $L(u) \subseteq \{1,\ldots, r\}$. 

Sharp~\cite{Sharp07} provided the following complexity dichotomy: For fixed $d \geq 3$, \textsc{$(d,r)$-Coloring} is polynomial-time solvable for $r \leq \lfloor 3d/2 \rfloor$ and $\mathsf{NP}$-hard for $r > \lfloor 3d/2 \rfloor$.

To the best of our knowledge, only one complete complexity dichotomy ($\mathsf{P}$ vs $\mathsf{NP}$-hard) on $H$-free graphs is known for a ``problem at distance'', obtained by Dallard et al.~\cite{DKM21} for \textsc{Distance-$d$ Vertex Cover}.

\subsection*{Our results} We prove three main algorithmic results for $kP_3$-free graphs. The first result, proven in \Cref{sec:amiable-fam}, concerns \textsc{Max-Weight List $r$-Colorable Induced Subgraph}.

\begin{restatable}{theorem}{main}\label{thm:main}
Let $r \geq 1$ be a fixed integer. For every $k \in \mathbb{N}$, \textsc{Max-Weight List $r$-Colorable Induced Subgraph} can be solved in polynomial time on $kP_3$-free graphs.
\end{restatable}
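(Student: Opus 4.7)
The plan is to proceed by induction on $k$. In the base case $k=1$, the graph $G$ is $P_3$-free and hence a disjoint union of cliques; on each clique $C$ the problem of selecting a maximum-weight $L$-colourable subset reduces to maximum-weight bipartite matching between $V(C)$ and $\{1,\dots,r\}$, with $u \in V(C)$ adjacent to each colour in $L(u)$, and the contributions of distinct cliques are summed.

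For the inductive step I would assume the theorem on $(k-1)P_3$-free graphs and consider $(G,w,L)$ with $G$ a $kP_3$-free graph. If $G$ is $P_3$-free, we invoke the base case. Otherwise, select an induced $P_3$ $P = v_1 v_2 v_3$. The key structural observation is that $G - N[V(P)]$ is $(k-1)P_3$-free: any induced $(k-1)P_3$ there would, together with $P$, give an induced $kP_3$ in $G$. The first move is then to branch on how $V(P)$ is handled by an optimum solution $F$ — at most $(r+1)^3$ possibilities, guessing for each $v_i$ whether $v_i \in F$ and, if so, its colour in $L(v_i)$ — and in each branch the lists of vertices in $N(V(P))$ are trimmed to remove colours forbidden by the guess.

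The main obstacle is the boundary set $N(V(P))$: these vertices may contribute to $F$, interact with both the decided $V(P)$ and the residual $G - N[V(P)]$, and yet neither discarding them (which would forfeit part of the optimum) nor keeping them inside the recursive call (which would destroy the $(k-1)P_3$-freeness driving the induction) is acceptable. To resolve this, I would use the \emph{amiable family} framework of \Cref{sec:amiable-fam}, iteratively enlarging the partial assignment until it absorbs enough of the boundary that the remaining graph is $(k-1)P_3$-free and can be dispatched by one inductive call; the contributions missed inside $N(V(P))$ are recovered by a tractable auxiliary subproblem (e.g.\ a matching computation against the already-guessed colour pattern). The crucial bound to establish is that the collection of enriched partial assignments enumerated in this process has size $n^{O(1)}$, with the exponent depending only on $k$ and $r$; proving this polynomial bound — most likely through a Ramsey- or sunflower-style counting argument tailored to the $kP_3$-free structure — is where I expect the heaviest combinatorial work to lie.
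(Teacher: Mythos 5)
Your base case is correct, and your key structural observation is exactly the right one: for an induced $P_3$ $P$, the anti-neighborhood $G-N[V(P)]$ is $(k-1)P_3$-free, and this is precisely the fact that drives the paper's recursion. But the heart of your inductive step is missing. You branch on how an \emph{optimal solution} meets $V(P)$ and then must cope with the boundary $N(V(P))$, and at exactly this point you offer only a plan: ``iteratively enlarging the partial assignment until it absorbs enough of the boundary,'' with the polynomial bound on the number of enriched partial assignments to be proved ``most likely through a Ramsey- or sunflower-style counting argument.'' No mechanism is given for either part. As sketched, a vertex of $N(V(P))$ left undecided can be adjacent both to guessed vertices of $V(P)$ and to vertices colored in the recursive call, so the recursion's answer need not combine with any completion on the boundary; and iterating the absorption step threatens a multiplicative blow-up across recursion levels that nothing in your sketch controls. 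You name the amiable-family framework, but constructing such a family and bounding its size \emph{is} the theorem's main content, so this is a genuine gap rather than a routine detail.

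The paper closes this gap by shifting the enumeration from solutions to color classes. Since each color class of any coloring respecting $L$ is an independent set of $G$, it suffices to compute an amiable family $\mathcal{S}$: a family of size $|V(G)|^{O(k)}$ of subsets each inducing a $P_3$-free subgraph, such that every maximal independent set is contained in some member (\Cref{lem:amiablefam}). The family is built incrementally along a vertex ordering: when adding $v_i$ to a member $S$ creates a $P_3$, the algorithm selects a suitable induced $P_3$ (either $v_iuw$ with $u,w$ in a clique component of $G[S]$, or $uv_iw$ with $v_i$ complete to two components) and recurses on its anti-neighborhood $\mathcal{A}(v_i,u,w)$, which is $(k-1)P_3$-free --- your observation, but applied to candidate supersets of independent sets rather than to optimal solutions, which eliminates the boundary problem entirely. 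The size bound then falls out of the elementary recurrence $f(n,k)\le 2n^4\, f(n,k-1)$, so $f(n,k)\le 2^{k-1}n^{4(k-1)}$; no Ramsey or sunflower argument is needed. Finally, for each $r$-tuple $(S_1,\ldots,S_r)\in\mathcal{S}^r$, a maximum-weight induced subgraph whose color class $i$ lies in $S_i$ is found by a single maximum-weight bipartite matching between vertices and the clique components of the graphs $G[S_i]$ (\Cref{lem:good-graphs}) --- the global analogue of your base-case matching. If you wish to repair your write-up, replace the solution-branching inductive step by this enumeration-plus-matching scheme; the ingredients you already have (the $k=1$ matching and the anti-neighborhood fact) are exactly the ones it uses.
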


\Cref{thm:main} has several interesting consequences. First, it immediately implies that \textsc{Odd Cycle Transversal} can be solved in polynomial time on $kP_3$-free graphs, for every $k \in \mathbb{N}$, thus solving a generalized version of the aforementioned open problem of Agrawal et al.~\cite{agrawal2024odd}. Our result for \textsc{Odd Cycle Transversal} also complements the polynomial-time algorithms for \textsc{Feedback Vertex Set} and \textsc{Even Cycle Transversal} on $kP_3$-free graphs of Paesani et al.~\cite{PPR22}. Second, \Cref{thm:main} generalizes the recent result of Chudnovsky et al.~\cite{CHS24} that \textsc{List $r$-Coloring} can be solved in polynomial time on $kP_3$-free graphs for every $r,k \in \mathbb{N}$. Although partially inspired by their approach, as we explain below, our proof of the more general \Cref{thm:main} has the advantage of being considerably shorter and self-contained.  

\Cref{thm:main} also makes considerable progress toward a complete complexity dichotomy for \textsc{Max-Weight List $r$-Colorable Induced Subgraph} and \textsc{Odd Cycle Transversal} on $H$-free graphs. Indeed, paired with the recent result of \cite{henderson2024maximum}, it \textit{completely} settles the complexity of \textsc{Max-Weight List $r$-Colorable Induced Subgraph} on $H$-free graphs for $r\geq 5$ (see \Cref{thm:literature-oct} and the discussion preceding it): 

\begin{theorem} Let $r \geq 5$ be a fixed integer. Assuming $\mathsf{P}\neq\mathsf{NP}$, \textsc{Max-Weight List $r$-Colorable Induced Subgraph} on $H$-free graphs is polynomial-time solvable if and only if $H$ is an induced subgraph of either $kP_3$ or $P_5+kP_1$, for some $k \geq 1$.
\end{theorem}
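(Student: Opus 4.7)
The plan is to derive the dichotomy by combining \Cref{thm:main} with the algorithm of Henderson et al.~\cite{henderson2024maximum} on the tractable side, and the hardness reductions collected in \Cref{thm:literature-oct} on the hard side. The tractable direction is essentially immediate: if $H$ is an induced subgraph of $kP_3$, then every $H$-free graph is $kP_3$-free, so \Cref{thm:main} applies; if instead $H$ is an induced subgraph of $P_5+kP_1$, then every $H$-free graph is $(P_5+kP_1)$-free and the algorithm of Henderson et al.\ applies.

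For the hardness direction, I would prove the contrapositive: if $H$ is not an induced subgraph of any $kP_3$ or any $P_5+kP_1$, then the problem is $\mathsf{NP}$-hard on $H$-free graphs. The strategy is to show that such an $H$ must contain, as an induced subgraph, at least one of: a cycle, a claw, $P_6$, $P_5+P_2$, or $P_4+P_2$. Since every $H$-free graph is then $H'$-free for the corresponding obstruction $H'$, hardness follows from the results recorded in \Cref{thm:literature-oct}, exploiting that $r\geq 5$: cycles and claws via the Chiarelli et al.\ reduction for \textsc{Odd Cycle Transversal} (the $r=2$ case, which lifts to $r\geq 5$ by restricting lists); $P_6$ and $P_5+P_2$ via the Dabrowski et al.\ reduction at $r=2$; and $P_4+P_2$ via the Couturier et al.\ reduction, which requires precisely $r\geq 5$.

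The combinatorial crux is verifying the above structural claim, which is a short case analysis. Suppose $H$ is simultaneously claw-free, acyclic, $P_6$-free, $(P_5+P_2)$-free, and $(P_4+P_2)$-free. Being claw-free and acyclic, $H$ is a linear forest, and $P_6$-freeness forces every component to be isomorphic to some $P_s$ with $s\leq 5$. If $H$ has a $P_5$ component, then $(P_5+P_2)$-freeness forces every other component to have at most one vertex, so $H$ is an induced subgraph of $P_5+cP_1$ for some $c$. If the largest component of $H$ is $P_4$, the same argument with $(P_4+P_2)$-freeness gives $H\leq P_4+cP_1\leq P_5+cP_1$. Otherwise every component is $P_1$, $P_2$ or $P_3$; since each embeds into a single copy of $P_3$, the graph $H$ embeds into $kP_3$ for $k$ equal to the number of components.

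I do not anticipate a genuine obstacle: the positive side is a direct appeal to \Cref{thm:main} and \cite{henderson2024maximum}, while the negative side is the short structural case analysis above followed by invocation of known reductions. The point of the theorem is simply that the classification gap left open by \Cref{thm:literature-oct}(iii) is precisely the $kP_3$-free case, which is settled by \Cref{thm:main}.
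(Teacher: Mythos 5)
Your proposal is correct and takes essentially the same route as the paper, which states this theorem without a separate proof precisely because it follows by combining \Cref{thm:main} with the Henderson et al.\ algorithm on the tractable side and the hardness results collected in \Cref{thm:literature-oct} (cycle/claw via \textsc{Odd Cycle Transversal}, then $P_6$, $P_5+P_2$, and $P_4+P_2$ for $r\geq 5$) on the hard side; your short structural case analysis correctly fills in the routine classification of the remaining linear forests. One directional slip worth fixing: you wrote that ``every $H$-free graph is then $H'$-free for the corresponding obstruction $H'$,'' but the correct transfer is the opposite inclusion --- since $H'$ is an induced subgraph of $H$, every $H'$-free graph is $H$-free, so $\mathsf{NP}$-hardness on the subclass of $H'$-free graphs lifts to the superclass of $H$-free graphs.
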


Moreover, paired with the results of \cite{chiarelli2018minimum,dabrowski2020cycle,henderson2024maximum} mentioned above, \Cref{thm:main} leaves the case $H=k_4P_4+k_3P_3+k_2P_2+k_1P_1$, where $k_4 \geq 1$ and $k_4 + k_3 \geq 2$, as the \textit{only} remaining open case toward a complete complexity dichotomy for \textsc{Odd Cycle Transversal} on $H$-free graphs. 

We then consider, in \Cref{sec:distancedamiable}, the distance-$d$ generalizations of \textsc{Max-Weight Independent Set} and \textsc{List $r$-Coloring}, for $d \geq 6$, and prove the following two results.

\begin{restatable}{theorem}{distanceIS}\label{thm:distanceIS}
Let $d \geq 6$ be a fixed integer. For every $k \in \mathbb{N}$, \textsc{Max-Weight Distance-$d$ Independent Set} can be solved in polynomial time on $kP_3$-free graphs.
\end{restatable}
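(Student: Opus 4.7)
The plan is to reduce \textsc{Max-Weight Distance-$d$ Independent Set} on a $kP_3$-free graph $G$ to \textsc{Max-Weight Independent Set} on the power graph $G^{d-1}$, where $V(G^{d-1})=V(G)$ and $uv\in E(G^{d-1})$ iff $1\le d_G(u,v)\le d-1$. A distance-$d$ independent set of $G$ is exactly an independent set of $G^{d-1}$, the weights transfer unchanged, and $G^{d-1}$ is computable in polynomial time via all-pairs shortest paths. Since \textsc{Max-Weight Independent Set} is the $r=1$ case of \textsc{Max-Weight List $r$-Colorable Induced Subgraph}, \Cref{thm:main} reduces everything to proving that $G^{d-1}$ is itself $kP_3$-free.

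The key structural claim I would prove is the following: \emph{if $G$ is $kP_3$-free and $d\ge 6$, then so is $G^{d-1}$.} Suppose for contradiction that $G^{d-1}$ contains an induced $kP_3$ on triples $(u_i,v_i,w_i)$ for $i\in\{1,\dots,k\}$. Translating into $G$-distances, we have $1\le d_G(u_i,v_i),\,d_G(v_i,w_i)\le d-1$, $d_G(u_i,w_i)\ge d$, and $d_G(x,y)\ge d$ for all $x\in\{u_i,v_i,w_i\}$, $y\in\{u_j,v_j,w_j\}$ with $i\ne j$. For each $i$, since $u_i$ and $w_i$ lie in the same component of $G$ (via $v_i$), a shortest $u_i$--$w_i$ path $P^*_i$ exists, has length at least $d\ge 6$, and is induced; let its first three vertices be $u_i=p^i_0,\,p^i_1,\,p^i_2$. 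Because subpaths of shortest paths are themselves shortest, $d_G(u_i,p^i_2)=2$, so $\{p^i_0,p^i_1,p^i_2\}$ induces a $P_3$ in $G$.

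Finally I would verify that these $k$ triples form an induced $kP_3$ of $G$. For $i\ne j$ and $s,t\in\{0,1,2\}$, two applications of the triangle inequality give
$d_G(p^i_s,p^j_t)\ge d_G(u_i,u_j)-s-t\ge d-4\ge 2$,
so distinct triples are vertex-disjoint and no edge of $G$ joins them. This produces an induced $kP_3$ of $G$, contradicting $kP_3$-freeness, and completes the proof together with \Cref{thm:main}.

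The main obstacle is really just spotting the correct auxiliary graph; once $G^{d-1}$ is in hand, the arithmetic $d-4\ge 2$ makes the hypothesis $d\ge 6$ exactly tight for this style of argument, and crucially there is no blow-up in the parameter $k$ when passing from $G$ to $G^{d-1}$, so the polynomial-time bound from \Cref{thm:main} transfers directly.
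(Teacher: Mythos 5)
Your proof is correct, and it takes a genuinely different route from the paper. The paper derives \Cref{thm:distanceIS} from its main technical contribution, \Cref{lem:dist-amiablefam}: it computes a distance-$d$ amiable family $\mathcal{S}$ of the $kP_3$-free input graph via a delicate inductive algorithm (strengthened to $F$-avoiding families to make the induction go through), and then, for each $S \in \mathcal{S}$, takes a maximum-weight independent set of the $P_3$-free graph $G[S]$; this is automatically a distance-$d$ independent set of $G$ because the clique components of $G[S]$ are pairwise at distance at least $d$. You bypass \Cref{lem:dist-amiablefam} entirely: your structural claim that $G^{d-1}$ inherits $kP_3$-freeness when $d \geq 6$ is sound (shortest paths are induced, subpaths of shortest paths are shortest, and the reverse triangle inequality gives cross-distance at least $d-4 \geq 2$ between the initial segments, with no blow-up in $k$), and it reduces the distance-$d$ problem to the $d=2$ machinery, namely \Cref{thm:main} with $r=1$, which rests only on the ordinary amiable families of \Cref{lem:amiablefam}. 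Notably, your observation would also yield \Cref{thm:distanceCol} for free, since a list $(d,r)$-coloring of $G$ is precisely a list $r$-coloring of $G^{d-1}$, so the power-graph reduction subsumes both distance theorems of the paper with much less machinery. What the paper's heavier route buys is the distance-$d$ amiable family itself, a stronger structural object of independent interest: its $F$-avoiding variant, the fact that it feeds directly into the matching reduction of \Cref{lem:good-graphs} stated for general $d$, and the concluding remarks, which tie a hypothetical distance-$4$ analogue to the open \Cref{q2} --- a regime where your trick provably cannot help, since $d-4 \geq 2$ fails for $d \leq 5$. It is a pleasing consistency check that both arguments break at exactly the same threshold: the paper's claim \eqref{clm:damiabilityP3} needs $2(d-3) > d-1$, i.e.\ $d \geq 6$, while yours needs $d-4 \geq 2$, both matching the known $\mathsf{NP}$-hardness at $d \in \{3,5\}$.
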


\begin{restatable}{theorem}{distanceCol}\label{thm:distanceCol}
Let $d \geq 6$ and $r \geq 1$ be fixed integers. For every $k \in \mathbb{N}$, \textsc{List $(d,r)$-Coloring} can be solved in polynomial time on $kP_3$-free graphs.
\end{restatable}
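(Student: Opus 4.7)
The plan is to reduce \textsc{List $(d,r)$-Coloring} on $G$ to \textsc{List $r$-Coloring} on the $(d-1)$-th power $G^{d-1}$, i.e.\ the graph on $V(G)$ in which two distinct vertices are adjacent exactly when their distance in $G$ is at most $d-1$, keeping the same list assignment $L$. By definition, a map $\phi\colon V(G)\to\{1,\ldots,r\}$ is a list $(d,r)$-coloring of $G$ respecting $L$ if and only if it is a list $r$-coloring of $G^{d-1}$ respecting $L$. Since $G^{d-1}$ is computable from $G$ in polynomial time via all-pairs shortest paths, and since \textsc{List $r$-Coloring} is the special case of the problem in \Cref{thm:main} obtained by setting all weights to $1$ and asking whether the optimum has value $|V(G)|$, it suffices to prove the structural statement: \emph{if $G$ is $kP_3$-free and $d\geq 6$, then $G^{d-1}$ is also $kP_3$-free}.

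To prove this, I would suppose that $G^{d-1}$ contains an induced $mP_3$ whose triples are $(u_i,v_i,w_i)$ for $i\in[m]$, and argue that $m<k$. Since $u_iw_i\notin E(G^{d-1})$, we have $d_G(u_i,w_i)\geq d$; fixing a shortest path $P_i$ from $u_i$ to $w_i$ in $G$, let $T_i$ denote its first three vertices. Then $T_i$ induces a $P_3$ in $G$, because any chord would shortcut the geodesic. For $i\neq j$ and any $x\in T_i$, $y\in T_j$, we have $d_G(x,u_i)\leq 2$ and $d_G(y,u_j)\leq 2$; combined with $d_G(u_i,u_j)\geq d$ (since $u_iu_j\notin E(G^{d-1})$), the triangle inequality gives $d_G(x,y)\geq d-4\geq 2$. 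Hence the $T_i$'s are pairwise disjoint and pairwise anticomplete in $G$, so they form an induced $mP_3$ in $G$, forcing $m<k$ by the $kP_3$-freeness of $G$.

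The proof is short; the only subtle point is the numerical bound $d-4\geq 2$, which is precisely why the hypothesis $d\geq 6$ appears. That is also where the argument could in principle break and is the step most worth double-checking—beyond that, the reduction and the structural claim plug directly into \Cref{thm:main}.
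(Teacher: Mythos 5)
Your proposal is correct, but it takes a genuinely different and substantially shorter route than the paper. The paper proves \Cref{thm:distanceCol} by redoing the whole machinery at distance $d$ on $G$ itself: it first establishes \Cref{lem:dist-amiablefam} (billed as the main technical contribution, an inductive algorithm whose correctness proof requires the auxiliary notions of $F$-avoiding families and $G_i$-compatibility), computes a distance-$d$ amiable family of $G$, and then applies the matching reduction of \Cref{lem:good-graphs} to every $r$-tuple of its members. You instead collapse the distance-$d$ problem to the case $d=2$: a list $(d,r)$-coloring of $G$ is, by definition, exactly a list $r$-coloring of the power $G^{d-1}$, and your structural lemma---the first three vertices $T_i$ of a $G$-geodesic from $u_i$ to $w_i$ induce a $P_3$, while for $x\in T_i$, $y\in T_j$ the bound $d_G(x,y)\geq d_G(u_i,u_j)-4\geq d-4\geq 2$ makes the $T_i$ pairwise disjoint and anticomplete---shows that $G^{d-1}$ is again $kP_3$-free whenever $d\geq 6$, so \Cref{thm:main} applies as a black box. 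I checked each step and the argument is sound; the one point you leave implicit is that the geodesic exists at all, i.e.\ $d_G(u_i,w_i)<\infty$, which holds since $d_G(u_i,w_i)\leq d_G(u_i,v_i)+d_G(v_i,w_i)\leq 2(d-1)$ via $v_i$. As for what each approach buys: your reduction needs only \Cref{lem:amiablefam} and \Cref{lem:good-graphs} with $d=2$, and---since a distance-$d$ independent set of $G$ is precisely an independent set of $G^{d-1}$---it also yields \Cref{thm:distanceIS} for free via the $r=1$ case of \Cref{thm:main}, rendering \Cref{lem:dist-amiablefam} unnecessary for both distance-$d$ theorems as stated. The paper's heavier route, on the other hand, actually constructs distance-$d$ amiable families, a structural object of independent interest and the natural lever for the remaining open case $d=4$ (\Cref{q2}); there your argument degenerates ($d-4<2$, so anticompleteness between the $T_i$ fails) for exactly the same numerical reason the paper's approach fails for $3\leq d\leq 5$, so both arguments localize the obstruction at the same threshold $d\geq 6$.
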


Paired with the aforementioned result of Eto et al.~\cite{EGM14}, \Cref{thm:distanceIS} completely settles the computational complexity of \textsc{Max-Weight Distance-$d$ Independent Set} on $kP_3$-free graphs, except for the \textit{only} remaining open case $d=4$.

\subsubsection*{Technical overview}

We now explain our approach toward \Cref{thm:main}, which combines ideas from \cite{L17,LM12} and \cite{CHS24}. It is instructive to first consider the special case of \textsc{Odd Cycle Transversal} (by complementation, \textsc{Max-Weight $2$-Colorable Induced Subgraph}) on $kP_2$-free graphs, where a very simple algorithm can be obtained. Indeed, $kP_2$-free graphs have polynomially many (inclusion-wise) maximal independent sets, and these can be enumerated in polynomial time. Consequently, a maximum-weight induced bipartite subgraph can be found by exhaustively enumerating all pairs of maximal independent sets \cite{chiarelli2018minimum}. However, it is easily seen that even $P_3$-free graphs (i.e., graphs whose connected components are cliques) do not have polynomially many maximal independent sets. But it turns out that a weaker property is enough for our purposes: Admitting a polynomial family of ``well-behaved'' vertex sets such that every maximal independent set is contained in one of these sets. In our case, ``well-behaved'' means inducing a $P_3$-free subgraph. The intuition is that, given such a family, we can efficiently guess the color classes, each of which will be a disjoint union of cliques, and then match vertices to the possible color classes.


The following key notion, which we dub amiable family, was first introduced by Lozin and Mosca~\cite{LM12}. For a graph $G$, a family $\mathcal{S} \subseteq 2^{V(G)}$ of subsets of $V(G)$ is an \textit{amiable family} if it satisfies the
following two properties:
\begin{itemize}
	\item Each member of $\mathcal{S}$ induces a $P_3$-free subgraph in $G$;
\item Each (inclusion-wise) maximal independent set of $G$ is contained in some member of $\mathcal{S}$.
\end{itemize}

Lozin and Mosca~\cite{LM12} showed that, when $k=2$, every $kP_3$-free graph $G$ admits an amiable family of size polynomial in $|V(G)|$ and which can be computed in polynomial time. Later, Lozin~\cite{L17} observed how such property in fact holds for every $k \geq 2$ (see \Cref{lem:amiablefam} for a formal statement). Given an amiable family $\mathcal{S}$ of polynomial size of a $kP_3$-free graph $G$, we would like to exhaustively solve \textsc{Max-Weight List $r$-Colorable Induced Subgraph} on every possible $r$-tuple consisting of members of $\mathcal{S}$. More precisely, let $(S_1, \ldots, S_r) \in \mathcal{S}^r$ be an $r$-tuple of members of $\mathcal{S}$. We would like to find a maximum-weight induced subgraph of $G[\bigcup_{i \in [r]} S_i]$ which admits an $r$-coloring respecting the given $r$-list assignment and such that, for $i=1, \ldots, r$, all vertices colored $i$ are contained in $S_i$. To do this, we then extend an idea of Chudnovsky et al.~\cite{CHS24} as follows. We reduce our problem to that of finding a maximum-weight matching in an auxiliary bipartite graph where one partition class $Y$ consists of $\bigcup_{i \in [r]} S_i$, the other class $X$ consists of the connected components of the subgraphs $G[S_i]$, for $i = 1, \ldots, r$, and there is an edge between $y \in Y$ and $x \in X$ if and only if $y$ belongs to the connected component $x$. Since each weighted matching problem can be solved in polynomial time using the Hungarian method (see, e.g., \cite[Theorem~17.3]{Sch}) and we build $|\mathcal{S}|^r$ auxiliary problems, which is a polynomial in $|V(G)|$, a solution to \textsc{Max-Weight List $r$-Colorable Induced Subgraph} can be found in polynomial time. 

\begin{figure}
\captionsetup{width=\linewidth}
\hspace{-2cm}
    \includegraphics[width=0.8\linewidth]{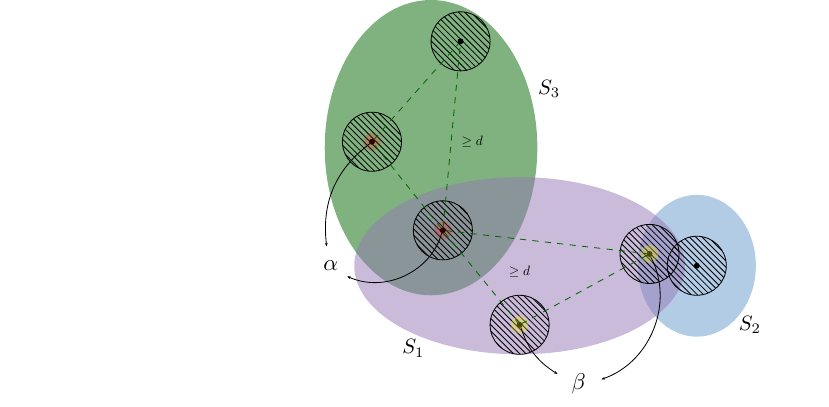}
    \caption{Visualization for distance-$d$ amiable family $\mathcal{S} = \{S_1, S_2, S_3\}$. Circles represent cliques and $\alpha$, $\beta$ are maximal distance-$d$ independent sets. Dashed lines depict paths of lengths at least $d$.}
    \label{fig:distance-d-amiable}
\end{figure}

In order to prove \Cref{thm:distanceIS,thm:distanceCol}, we consider the following distance-$d$ generalization of the notion of amiable family. For a graph $G$, a family $\mathcal{S} \subseteq 2^{V(G)}$ of subsets of $V(G)$ is a \textit{distance-$d$ amiable family} if it satisfies the following properties:
\begin{itemize}
    \item Each member of $\mathcal{S}$ induces a $P_3$-free subgraph in $G$;
    \item For each $S \in \mathcal{S}$, the connected components of $G[S]$ are pairwise at distance at least $d$ in $G$;
    \item Each (inclusion-wise) maximal distance-$d$ independent set of $G$ is contained in some member of $\mathcal{S}$.
\end{itemize}

Clearly, a distance-$2$ amiable family is nothing but an amiable family. Our main technical contribution is the following. 

\begin{restatable}{lemma}{distamiablefam}\label{lem:dist-amiablefam}
Let $d\geq 6$ be a fixed integer. For every $k \in \mathbb{N}$, every $kP_3$-free graph admits a distance-$d$ amiable family of size $|V(G)|^{O(k)}$, which can be computed in time $|V(G)|^{O(k)}$.
\end{restatable}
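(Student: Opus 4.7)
The plan is to give a direct, non-inductive construction of $\mathcal{S}$. First, let $S_{0}$ denote the union of all connected components of $G$ that happen to be cliques; this can be computed in polynomial time by BFS together with a clique check on each component. I will then define
\[
\mathcal{S} \;:=\; \bigl\{\, S_{0}\cup T \;:\; T \subseteq V(G)\setminus S_{0},\ |T|\le k-1,\ T \text{ is a distance-}d\text{ independent set of }G \,\bigr\},
\]
which has size at most $O(|V(G)|^{k-1}) = |V(G)|^{O(k)}$ and can be enumerated within the same time bound once a BFS distance matrix of $G$ is precomputed.

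Second, I will check that every $S\in\mathcal{S}$ satisfies conditions (1) and (2) of a distance-$d$ amiable family. The key observation is that, since each vertex of $S_{0}$ lies in a component of $G$ that is a clique, no vertex outside $S_{0}$ can have a neighbor in $S_{0}$. Hence the vertices of $T$ are isolated in $G[S]$, and $G[S]$ is the disjoint union of the cliques of $G[S_{0}]$ (each a full component of $G$) together with singletons coming from $T$. This is $P_{3}$-free, and its components are pairwise at $G$-distance $+\infty$ (between components of $G$) or at least $d$ (between singletons of $T$, since $T$ is distance-$d$ independent).

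The heart of the argument is the covering property. Given a maximal distance-$d$ independent set $\alpha$ of $G$, I will partition $\alpha = \alpha_{\mathrm{iso}}\cup\alpha_{\mathrm{p3}}$ according to whether the component in $G$ of the vertex in question is a clique; by construction $\alpha_{\mathrm{iso}}\subseteq S_{0}$. The central structural step, which I expect to be the only non-routine point, is to show that each $a\in\alpha_{\mathrm{p3}}$ belongs to an induced $P_{3}$ whose three vertices all lie within distance $2$ of $a$: if $N(a)$ is not a clique, two non-adjacent neighbors of $a$ yield such a $P_{3}$; otherwise the component of $a$ contains a vertex outside $N[a]$, and the first three vertices of a shortest $a$-to-that-vertex path do the job. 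Now, because distinct vertices of $\alpha_{\mathrm{p3}}$ are at $G$-distance at least $d\ge 6 = 2+2+2$, these little $P_{3}$'s are pairwise vertex-disjoint and pairwise non-adjacent in $G$, so their union induces $|\alpha_{\mathrm{p3}}|\,P_{3}$ in $G$. The $kP_{3}$-freeness of $G$ then forces $|\alpha_{\mathrm{p3}}|\le k-1$, so that taking $T:=\alpha_{\mathrm{p3}}$ gives $\alpha\subseteq S_{0}\cup T\in\mathcal{S}$.

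The main obstacle is therefore the small structural lemma that a vertex $a$ lies in an induced $P_{3}$ contained in $B_{2}(a)$ whenever the component of $a$ in $G$ is not a clique; once this is in hand, everything else amounts to the routine distance bookkeeping made possible by the slack $d\ge 6$.
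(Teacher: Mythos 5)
Your construction is correct, and it is a genuinely different and substantially simpler argument than the paper's. The paper proves the lemma by a recursive branching algorithm $\Lambda_k^d$ modeled on the Lozin--Mosca procedure for the case $d=2$: it must strengthen the statement to \emph{$F$-avoiding} families in order to make the recursion on the $(k-1)P_3$-free graphs $G[N_G^{\geq 4}(v)]$ go through, and its covering proof needs a delicate double induction (on $k$ and on a vertex ordering) with the auxiliary notion of $G_i$-compatible independent sets. You instead exploit the slack $d\geq 6$ once and for all: a fixed core $S_0$ (the union of clique components) absorbs every vertex of a distance-$d$ independent set that lives in a clique component, and your ball argument --- each vertex in a non-clique component spawns an induced $P_3$ inside $B_2(a)$, and for $a\neq a'$ at distance $\geq 6$ any $x\in B_2(a)$, $y\in B_2(a')$ satisfy $d_G(x,y)\geq 6-2-2=2$, so the $P_3$'s are disjoint and anticomplete --- caps the number of remaining vertices at $k-1$ by $kP_3$-freeness. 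I verified the details: your structural sub-lemma is sound (if $N(a)$ is not a clique take two non-adjacent neighbors; otherwise $N[a]$ being a clique forces a vertex at distance exactly $2$, giving an induced $P_3$ in $B_2(a)$, with the degenerate cases $|N(a)|\leq 1$ excluded since the component is not a clique); vertices of $T$ are indeed isolated in $G[S_0\cup T]$ and at infinite $G$-distance from $S_0$'s components because clique components are full components of $G$; and your family covers \emph{all} distance-$d$ independent sets, which is stronger than the maximality-restricted requirement. Notably, both proofs break at exactly the same threshold --- your anticompleteness needs $d\geq 2+2+2$, precisely mirroring the paper's contradiction $2(d-3)\leq d-1$, i.e.\ $d\leq 5$, consistent with the known hardness at $d\in\{3,5\}$. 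What each approach buys: yours gives a smaller family ($O(|V(G)|^{k-1})$ members versus the paper's bound $2^{k-1}|V(G)|^{4(k-1)}$) and an essentially one-shot algorithm (APSP plus enumeration of small distance-$d$ independent sets); the paper's recursive machinery, by contrast, is uniform with the $d=2$ amiable-family case --- where your argument genuinely fails, since at distance $2$ a single non-clique component (e.g.\ a star) can host arbitrarily many independent vertices --- and so it explains the whole spectrum $d\in\{2\}\cup[6,\infty)$ with one template. One presentational caveat: your final ``$S_0$ versus $T$'' dichotomy silently uses that distinct vertices of $\alpha$ in the \emph{same} non-clique component are also handled by the ball argument (they are, since only pairwise distance $\geq d$ is used, not component membership), which is worth making explicit in a write-up.
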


Although the algorithm for \Cref{lem:dist-amiablefam} is inspired by the case $d = 2$ and hence by the work of Lozin and Mosca~\cite{LM12}, its proof of correctness is much more involved and requires genuinely new ideas.

Equipped with \Cref{lem:dist-amiablefam}, we immediately obtain \Cref{thm:distanceIS}. Indeed, in order to solve \textsc{Max-Weight Distance-$d$ Independent Set} on a $kP_3$-free graph $G$, we simply find in polynomial time a distance-$d$ amiable family $\mathcal{S}$ of $G$ as above and, for each member $S \in \mathcal{S}$, find a max-weight independent set in the $P_3$-free graph $G[S]$. The latter can be clearly done in polynomial time, thus proving \Cref{thm:distanceIS}. The proof of \Cref{thm:distanceCol} is similar to that of \Cref{thm:main}, the only difference being the use of a distance-$d$ amiable family for $d \geq 6$.   

A remark about our approach, which combines ideas of Lozin and Mosca~\cite{L17,LM12} and Chudnovsky et al.~\cite{CHS24}, is in place. The proof of Chudnovsky et al.~\cite{CHS24} that \textsc{List $r$-Coloring} ($r \geq 1$) is polynomial-time solvable on $kP_3$-free graphs generalizes the earlier result of Hajebi et al.~\cite{hajebi2022complexity} that \textsc{List $5$-Coloring} is polynomial-time solvable on $kP_3$-free graphs, by replacing the second step in the proof of Hajebi et al.~(see \cite[Theorem~4.3]{hajebi2022complexity}) with a significantly simpler argument (the aforementioned reduction to a bipartite matching problem) that, in addition, works for all $r \in \mathbb{N}$. However, their result still relies on the very technical first step of Hajebi et al.~(see \cite[Theorem~5.1]{hajebi2022complexity}). Our approach can be viewed as a step further in the direction of simplifying and generalizing, as exemplified by our \Cref{thm:main,thm:distanceCol}, which extend in different ways the main result of Chudnovsky et al.~\cite{CHS24} by means of an arguably elegant and self-contained proof.

We conclude with some remarks on possible alternative approaches. As already mentioned, Paesani et al.~\cite{PPR22} provided polynomial-time algorithms for \textsc{Feedback Vertex Set} (by complementation, the problem of finding a max-weight induced forest) and \textsc{Even Cycle Transversal} (by complementation, the problem of finding a max-weight induced odd cactus) on $kP_3$-free graphs. As they observe, their technique cannot be adapted for \textsc{Odd Cycle Transversal}. Loosely speaking, the reason is that the blocks of bipartite graphs can be arbitrarily complicated, contrary to the case of forests and odd cacti.  

Some of the problems mentioned in this introduction are polynomial-time solvable on graph classes with polynomially many potential maximal cliques (or, equivalently \cite{BT02}, polynomially many minimal separators) \cite{FTV15,MT16}, on graph classes of bounded tree-independence number \cite{LMM24} and, more generally, of bounded sim-width \cite{BMPY25}. However, even $2P_3$-free graphs do not have polynomially many minimal separators (see, e.g., \cite{MP21}). Similarly, $2P_3$-free graphs have unbounded sim-width (see, e.g., \cite{BKR23}) and hence unbounded tree-independence number as well.


\section{Preliminaries}\label{sec:prelim}
We denote the set of positive integers by $\mathbb{N}$. For every $n\in \mathbb{N}$, we let $[n]:= \{1,\dots, n\}$. Given a set $A$, we denote by $A^r$ the set of all ordered $r$-tuples of elements of $A$, i.e., $A^r = \{(a_1, \ldots, a_r) : a_i \in A \ \mbox{for} \ i = 1, \ldots, r\}$. 

All graphs in our paper are finite and simple. The \textit{empty graph} is the graph with no vertices.
Let now $G$ be a graph. For $X \subseteq V(G)$, we denote the subgraph of $G$ \textit{induced} by $X$ as $G[X]$, that is $G[X] = (X, \{uv : u,v \in X \ \mbox{and} \ uv \in E(G)\})$. We use $N_{G}(X)$ to denote the neighbors in $V\setminus X$ of vertices in $X$. For disjoint sets $X, Y \subseteq V(G)$, we say that $X$ is \textit{complete} to $Y$ if every vertex in $X$ is adjacent to every vertex in $Y$, and $X$ is \textit{anticomplete} to $Y$ if there are no edges between $X$ and $Y$. For a subset $X\subseteq V(G)$, the \textit{anti-neighborhood} of $X$, denoted by $\mathcal{A}(X)$, is the subset of vertices in $V(G)\setminus X$ which are anticomplete to $X$. With a slight abuse of notation, if $X = \{v_1, \ldots, v_i\}$, we denote $\mathcal{A}(X) = \mathcal{A}(\{v_1, \ldots, v_i\})$ by $\mathcal{A}(v_1, \ldots, v_i)$. 

Given two subsets $X, Y \subseteq V(G)$, an \textit{$X, Y$-path} in $G$ is a path in $G$ which has one end in $X$, the other end in $Y$, and whose
inner vertices belong to neither $X$ nor $Y$. For vertices $u, v \in V(G)$, we denote by $d_{G}(u, v)$ the distance between $u$ and $v$ in $G$, i.e., the length of a shortest $u, v$-path in $G$. If no such path exists, we let $d_G(u,v) = \infty$. Moreover, for $d \in \mathbb{N}$, we let $N^{\geq d}_G(v) = \{u\in V(G) : d_G(v,u) \geq d\}$ and $N^{\leq d}_G(v) = \{u\in V(G)\setminus\{v\} :  d_G(v,u) \leq d\}$. 
Given two subsets $X, Y \subseteq V(G)$, the \textit{distance} between $X$ and $Y$ in $G$ is the quantity $d_G(X,Y) = \min_{x\in X,y\in Y} d_G(x, y)$, i.e., the length of a shortest path in $G$ between a vertex in $X$ and a vertex in $Y$. Given $u \in V(G)$ and $Q \subseteq V(G)$, we say that $u$ is connected to $Q$ in $G$ if there exists a $u, Q$-path in $G$ (possibly of length $0$).

A \textit{clique} of a graph is a set of pairwise adjacent vertices and an \textit{independent set} is a set of pairwise non-adjacent vertices. A \textit{matching} of a graph is a set of pairwise
non-adjacent edges.

A \textit{connected component} of $G$ is a maximal connected subgraph of $G$. For convenience, we will often view a connected component as its vertex set rather than the subgraph itself. For this reason, we will say for example that ``a connected component is a clique'' rather than ``a connected component is a complete subgraph''. Throughout the paper, we will repeatedly make use of the fact that every connected component of a $P_3$-free graph is a clique.  


\section{The proof of \Cref{thm:main}}\label{sec:amiable-fam}

In this section we prove \Cref{thm:main}. As mentioned in the introduction, the first step is to obtain a polynomial-time algorithm for finding an amiable family (necessarily of polynomial size) of a $kP_3$-free graph. Since no proof for $k > 2$ is given in \cite{L17} and as a preparation for the more technical case of distance-$d$ amiable families addressed in \Cref{sec:distancedamiable}, we provide a full proof of this result in \Cref{lem:amiablefam}. The second step consists then in reducing \textsc{Max-Weight List $r$-Colorable Induced Subgraph} to polynomially many auxiliary weighted matching problems. We do this in \Cref{lem:good-graphs}. We finally combine these two steps and prove \Cref{thm:main}.

\begin{lemma}\label{lem:amiablefam}
For every $k \in \mathbb{N}$, every $kP_3$-free graph $G$ admits an amiable family of size $|V(G)|^{O(k)}$, which can be computed in time $|V(G)|^{O(k)}$. 
\end{lemma}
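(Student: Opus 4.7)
The plan is to exploit a simple structural observation: in a $kP_3$-free graph $G$, an inclusion-maximal family $\mathcal{P}$ of vertex-disjoint induced $P_3$'s has at most $k-1$ members, so its vertex set $X := V(\mathcal{P})$ satisfies $|X| \leq 3(k-1)$, and crucially $G[V(G) \setminus X]$ is itself $P_3$-free by maximality of $\mathcal{P}$ (otherwise we could extend $\mathcal{P}$ by a new induced $P_3$ disjoint from $X$). Consequently $G[V(G)\setminus X]$ is a disjoint union of cliques. I would compute $X$ greedily by repeatedly finding an induced $P_3$ in the residual graph---at most $k-1$ iterations, each costing $O(|V(G)|^3)$.

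Having fixed such an $X$, I would declare the candidate amiable family to be
\[
\mathcal{S} \;=\; \{\,S_{I_X} : I_X \text{ is an independent set of } G[X]\,\},\qquad S_{I_X} := I_X \,\cup\, \{v \in V(G)\setminus X : N_G(v) \cap I_X = \emptyset\}.
\]
Since $|X| \leq 3(k-1)$, this gives $|\mathcal{S}| \leq 2^{3(k-1)}$, which is trivially within $|V(G)|^{O(k)}$, and constructing each $S_{I_X}$ costs polynomial time, so the whole procedure runs in $|V(G)|^{O(k)}$ time.

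It remains to verify the two amiable-family axioms. For $P_3$-freeness of $G[S_{I_X}]$: the part $S_{I_X}\setminus I_X$ lies inside $V(G)\setminus X$ and therefore inherits the ``disjoint union of cliques'' structure of $G[V(G)\setminus X]$; the part $I_X$ is an independent set; and there are no edges between these two parts by the very definition of $S_{I_X}$. Hence $G[S_{I_X}]$ is a disjoint union of cliques (treating isolated vertices as singleton cliques), so it is $P_3$-free. For covering: if $I$ is a maximal independent set of $G$, then $I \cap X$ is a valid choice of $I_X$, and every $v \in I\setminus X$ has no neighbor in $I$ and so in particular no neighbor in $I\cap X$, giving $v \in S_{I\cap X}$ and hence $I \subseteq S_{I\cap X} \in \mathcal{S}$. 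The only conceptually nontrivial step is the opening observation about maximal $P_3$-packings; once that is in hand, the choice of $S_{I_X}$ (in particular, the anti-complete condition between $I_X$ and the residual piece) is precisely what is needed to kill any $P_3$ straddling $X$ and $V(G)\setminus X$, and the rest follows with essentially no further work.
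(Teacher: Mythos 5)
Your opening observation is false, and the construction collapses with it. An induced copy of $kP_3$ consists of $k$ induced $P_3$'s that are pairwise vertex-disjoint \emph{and anticomplete} (no edges between distinct copies); $kP_3$-freeness therefore bounds packings of pairwise anticomplete $P_3$'s, not packings of merely vertex-disjoint ones. Concretely, let $G$ be the join of two copies of $mP_2$ (every vertex of one side adjacent to every vertex of the other). Each side is $P_3$-free, so every induced $P_3$ of $G$ meets both sides; hence any two vertex-disjoint induced $P_3$'s have an edge between them, and $G$ is $2P_3$-free. Yet $G$ contains $\Omega(m)$ vertex-disjoint induced $P_3$'s (take $a_1ba_2$ with $a_1,a_2$ in distinct matching edges of one side and $b$ on the other), so your greedy packing runs for $\Omega(m)$ rounds and produces a set $X$ of linear size, not size at most $3(k-1)$. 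Worse, in this example $X$ contains an independent set of size $\Omega(m)$ (the removed side-one vertices lie in distinct matching edges and are pairwise non-adjacent), so $G[X]$ has $2^{\Omega(m)}$ independent sets and your family $\{S_{I_X}\}$ is exponential in $|V(G)|$. The natural repair---packing pairwise \emph{anticomplete} induced $P_3$'s, which is indeed bounded by $k-1$---destroys the other half of your argument: maximality then only guarantees that every induced $P_3$ in $G-X$ has an edge to $X$, not that it intersects $X$, so $G-X$ need not be $P_3$-free and the residual graph is no longer a disjoint union of cliques.

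I will add that the two halves of your write-up that follow the false premise (the definition of $S_{I_X}$ and the covering argument for a maximal independent set $I$ via $I\cap X$) are correct conditional on it, so the flaw is localized but fatal: there is simply no bounded-size modulator $X$ with $G-X$ being $P_3$-free in a general $kP_3$-free graph. The paper's proof avoids any such modulator. Following Lozin and Mosca, it builds the family incrementally, vertex by vertex, and whenever adding $v_i$ to a current member $S$ creates an induced $P_3$, it branches on how an independent set can meet that $P_3$ and recurses on the anti-neighborhood $\mathcal{A}(v_i,u,w)$ of the three relevant vertices. This is where $kP_3$-freeness is exploited correctly: the anti-neighborhood is anticomplete to an induced $P_3$, hence genuinely $(k-1)P_3$-free, and the recursion yields a family of size $|V(G)|^{O(k)}$ rather than the $2^{O(k)}$ you aimed for---a polynomial, not constant, bound being all the lemma claims or needs.
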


\begin{proof}
To prove the lemma, we provide, for every $k \in \mathbb{N}$, an algorithm $\Gamma_{k}$ which takes as input a $kP_3$-free graph $G$ together with an arbitrary ordering of $V(G)$ and outputs an amiable family $\Gamma_k(G)$ of $G$. The pseudo-code of the algorithm $\Gamma_k$ is given in \Cref{algo:Gamma_k}. Note that, if the input graph $G$ is the empty graph, $\Gamma_k$ correctly outputs $\Gamma_k(G) = \{\varnothing\}$. Moreover, for $k=1$, $\Gamma_k$ correctly outputs $\{V(G)\}$. We now show that, for every $k \in \mathbb{N}$ and every $kP_3$-free graph $G$, the family returned by $\Gamma_k$ is indeed an amiable family of $G$. 

In the following, given a graph $G$ on $n$ vertices and an ordering $v_1,\ldots,v_n$ of $V(G)$, we let $G_i = G[\{v_1,\ldots,v_i\}]$. Moreover, we denote an induced $P_3$ in $G$ with vertex set $\{u,v,w\}$ and degree-$2$ vertex $v$ by $uvw$.  

\begin{algorithm}{}
\caption{$\Gamma_k$}
\label{algo:Gamma_k}
\begin{algorithmic}[1]
\Require A $kP_3$-free graph $G$ with an arbitrary ordering $v_1, \ldots, v_n$ of $V(G)$. 
\Ensure An amiable family of $G$.
\State Initialize $\mathcal{S} = \{\varnothing\}$.
\For{$i = 1, \dots, n$}\label{mainloop}

\For{every member $S \in \mathcal{S}$}\label{step2.1} \Comment{Extend members of $\mathcal{S}$}
\If{$G[S\cup \{v_i\}]$ is $P_3$-free}
\State Set $S = S \cup \{v_i\}$.
\EndIf
\EndFor

\For{every induced $P_3$ $v_{i}uw$ in $G_i$}\label{step2.2} \Comment{Add new members to $\mathcal{S}$}
\State Compute the family $\mathcal{C}:=\Gamma_{k-1}(G[\mathcal{A}(v_{i},u,w)])$. 
\For{every $C \in \mathcal{C}$} 
\State Set $\mathcal{S} = \mathcal{S} \cup \{C \cup \{v_i,w\}\}$.
\EndFor
\EndFor

\For{every induced $P_3$ $uv_{i}w$ in $G_i$}\label{step2.3} \Comment{Add new members to $\mathcal{S}$}
\State Compute the family $\mathcal{C}:=\Gamma_{k-1}(G[\mathcal{A}(v_{i},u,w)])$. 
\For{every $C \in \mathcal{C}$} 
\State Set $\mathcal{S} = \mathcal{S} \cup \{C \cup \{v_i\}\}$.
\EndFor
\EndFor
\EndFor
\State \Return $\Gamma_k(G) := \mathcal{S}$.
\end{algorithmic}
\end{algorithm}

\clm{For every $k \in \mathbb{N}$, if $G$ is a $kP_3$-free graph and $S\in \Gamma_k(G)$, then $G[S]$ is $P_3$-free.}\label{amiablefam:P3-freeness}

\begin{claimproof}[Proof of \eqref{amiablefam:P3-freeness}] We proceed by induction on $k$. The statement holds for $k=1$, since the graph in input itself is $P_3$-free. Suppose now that $k > 1$ and that the statement holds for $k-1$. Let $G$ be an $n$-vertex $kP_3$-free graph in input. For every $i \in [n]$, let $\mathcal{S}^i$ be the state of the family $\mathcal{S}$ at the end of the $i$-th iteration of the main loop in line~\ref{mainloop}. It is enough to show by induction on $i$ that each member of $\mathcal{S}^i$ induces a $P_3$-free subgraph of $G$. The base case $i=1$ trivially holds, since $\mathcal{S}^1 = \{\{v_1\}\}$. Thus, suppose that $i > 1$ and that the statement holds for $i-1$.

Consider $S \in \mathcal{S}^i$. If $S \in \mathcal{S}^{i-1}$ then, by the induction hypothesis, $G[S]$ is $P_3$-free. Thus, we may assume that $S \notin \mathcal{S}^{i-1}$. This implies that $S$ is added to $\mathcal{S}^i$ in one of the three inner loops during the $i$-th iteration of the main loop. Suppose first that $S$ is added in the line-\ref{step2.1} loop as an extension of a member of $\mathcal{S}^{i-1}$. Then, $G[S]$ is $P_3$-free by construction. Suppose next that $S$ is added to $\mathcal{S}^i$ in the line-\ref{step2.2} loop. This implies that there exists an induced $P_3$ $v_iuw$ in $G_i$ and a set $C \in \Gamma_{k-1}(G[\mathcal{A}(v_i,u,w)])$ such that $S = C \cup \{v_i,w\}$. Observe that, since $G$ is $kP_3$-free, $G[\mathcal{A}(v_i,u,w)]$ is $(k-1)P_3$-free and so, by the induction hypothesis on $k-1$, every member of $\Gamma_{k-1}(G[\mathcal{A}(v_i,u,w)])$ induces a $P_3$-free subgraph of $G$. This readily implies that $G[C \cup \{v_i,w\}]$ is $P_3$-free as well. We conclude similarly in the case that $S$ is added to $\mathcal{S}^i$ in the line-\ref{step2.3} loop.
\end{claimproof}

\clm{For every $k \in \mathbb{N}$, if $G$ is a $kP_3$-free graph, then every independent set of $G$ is contained in a member of $\Gamma_k(G)$.}\label{amiablefam:IS}

\begin{claimproof}[Proof of \eqref{amiablefam:IS}] We proceed by induction on $k$. The statement clearly holds for $k=1$, since for any $P_3$-free graph $G$, $\Gamma_1(G) = \{V(G)\}$. Suppose now that $k > 1$ and that the statement holds for $k-1$. Let $G$ be a $kP_3$-free graph in input. For every $i \in [n]$, let $\mathcal{S}^i$ be the state of the family $\mathcal{S}$ at the end of the $i$-th iteration of the main loop. It is enough to show by induction on $i$ that every independent set of $G_i$ is contained in some member of $\mathcal{S}^i$. The base case $i = 1$ trivially holds, since $\mathcal{S}^1 = \{\{v_1\}\}$ and $\{v_1\}$ is the only maximal independent set of $G_1$. Thus, suppose that $i > 1$ and that the statement holds for $i - 1$.

Consider an independent set $I$ of $G_i$. If $v_i \notin I$, then $I$ is an independent set of $G_{i-1}$ and so, by the induction hypothesis, there exists $S \in \mathcal{S}^{i-1}$ such that $I \subseteq S$. Observe now that, by construction, there exists $S' \in \mathcal{S}^i$ such that $S \subseteq S'$ and hence $I \subseteq S'$. If however $v_i \in I$ then, by the induction hypothesis, there exists $S \in \mathcal{S}^{i-1}$ such that $I \setminus \{v_i\} \subseteq S$. If $G[S \cup \{v_i\}]$ is $P_3$-free, then $S \cup \{v_i\} \in \mathcal{S}^i$ thanks to the line-\ref{step2.1} loop. Thus, we may assume that $G[S \cup \{v_i\}]$ contains at least one induced $P_3$. Moreover, by the proof of \eqref{amiablefam:P3-freeness}, $G[S]$ is $P_3$-free, and so any such $P_3$ must contain $v_i$. We distinguish two cases.

Suppose first that there exists a connected component $Q$ of $G[S]$ and a vertex $u \in Q$ such that $v_i$ is adjacent to $u$ but not complete to $Q$. Note that, since $G[S]$ is $P_3$-free, $Q$ is a clique. Now, if $Q$ contains a vertex of $I$, let $w$ be this vertex; otherwise, let $w$ be an arbitrary vertex of $Q$ nonadjacent to $v_i$. Observe that $I \setminus \{v_i,w\}$ is fully contained in $G[S] - Q$, which implies that $I \setminus \{v_i,w\} \subseteq \mathcal{A}(v_i,u,w)$. Since $G[\mathcal{A}(v_i,u,w)]$ is $(k-1)P_3$-free, the induction hypothesis on $k-1$ implies that there exists $C \in \Gamma_{k-1}(G[\mathcal{A}(v_i,u,w)])$ such that $I \setminus \{v_i,w\} \subseteq C$. Hence, $C \cup \{v_i,w\} \in \mathcal{S}^i$ and $I \subseteq C \cup \{v_i,w\}$. 

Suppose finally that, for every connected component $Q$ of $G[S]$, the vertex $v_i$ is either complete or anticomplete to $Q$. This implies that there exist at least two connected components $Q_1$ and $Q_2$ of $G[S]$ such that $v_i$ is complete to both $Q_1$ and $Q_2$. Let $u$ and $w$ be two arbitrary vertices of $Q_1$ and $Q_2$, respectively. Since $I \setminus \{v_i\}$ is fully contained in $G[S] - (Q_1 \cup Q_2)$, in particular, $I \setminus \{v_i\} \subseteq \mathcal{A}(v_i,u,w)$. But $G[\mathcal{A}(v_i,u,w)]$ is $(k-1)P_3$-free and so, by the induction hypothesis on $k-1$, there exists $C \in \Gamma_{k-1}(G[\mathcal{A}(v_i,u,w)])$ such that $I \setminus \{v_i\} \subseteq C$. Hence, $C \cup \{v_i\} \in \mathcal{S}^i$ and $I \subseteq C \cup \{v_i\}$, thus concluding the proof of \eqref{amiablefam:IS}. 
\end{claimproof}

It now follows from \eqref{amiablefam:P3-freeness} and \eqref{amiablefam:IS} that, for every $k \in \mathbb{N}$, if $G$ is a $kP_3$-free graph, then the family $\Gamma_k(G)$ is indeed an amiable family. It remains to show that $\Gamma_k(G)$ has size at most $|V(G)|^{O(k)}$ and that the running time of the algorithm $\Gamma_k$ is $|V(G)|^{O(k)}$. To this end, for every $n,k \in \mathbb{N}$, let $f(n,k) = \max \{|\Gamma_k(G)| \colon |V(G)| \leq n \text{ and } G \text{ is } kP_3\text{-free}\}$. Clearly, $f(n,1) = 1$ for every $n \in \mathbb{N}$. We claim that, for every $n \in \mathbb{N}$ and $k > 1$, $f(n,k) \leq 2n^4\cdot f(n,k-1)$. Indeed, for every $n$-vertex $kP_3$-free graph $G$, a member of $\Gamma_k(G)$ can only be created during the $i$-th iteration of the main loop (for some $i \in [n]$), in one of the inner loops of lines \ref{step2.2} and \ref{step2.3} from an induced $P_3$ of $G_i$ and some set resulting from a call to $\Gamma_{k-1}$. Since for each $i \in [n]$ there are at most $i^3$ such copies of $P_3$ in $G_i$, at most $2i^3 \cdot f(n,k-1)$ new members are added in the $i$-th iteration of the main loop. It follows that $f(n,k) \leq \sum_{i \in [n]} 2i^3 \cdot f(n,k-1) \leq 2n^4 \cdot f(n,k-1)$ and thus, $f(n,k) \leq 2^{k-1}n^{4(k-1)}$. Similarly, for every $n,k \in \mathbb{N}$, if $T(n,k)$ denotes the running time of the algorithm $\Gamma_k$ on an $n$-vertex $kP_3$-free graph, then clearly $T(n,1) = O(n)$. Furthermore, we obtain the following recurrence for $T(n,k)$, where we use the fact that checking if an $n$-vertex graph is $P_3$-free can be done in $O(n^3)$ time: \[T(n,k) \leq cn\cdot(f(n,k)\cdot n^3 + 2n^3\cdot(T(n,k-1) + f(n,k-1))) \leq 2cn^4 \cdot T(n,k-1) + O(n^{4k}),\] for some constant $c > 0$. We conclude that $T(n,k) \leq n^{O(k)}$. This completes the proof of~\Cref{lem:amiablefam}.
\end{proof}

The next result constitutes the second step in the proof of \Cref{thm:main}. We formulate it for arbitrary distances, as it will also be used in the proof of \Cref{thm:distanceCol} given in the next section.

\begin{lemma}\label{lem:good-graphs}
Let $r\geq 1$ and $d \geq 2$ be fixed integers. Let $G$ be a graph with weight function $w\colon V(G) \rightarrow \mathbb{Q}_{+}$, $L$ an $r$-list assignment of $G$, and $\mathcal{S}$ a distance-$d$ amiable family of $G$. Given an $r$-tuple $(S_1,\ldots,S_r) \in \mathcal{S}^r$, there exists an $O(((r+1)|V(G)|)^{3})$-time 
algorithm that finds a maximum-weight induced subgraph $H$ of $G[\bigcup_{i \in [r]} S_i]$ which admits a $(d, r)$-coloring $\phi\colon V(H) \to [r]$ satisfying the following:
\begin{enumerate}[label=(\roman*),ref=(\roman*)]
\item\label{itm:property1} For every $v \in V(H)$, $\phi(v) \in L(v)$;
\item\label{itm:property2} For every $i \in [r]$, $\{v \in V(H) \colon \phi(v) = i\} \subseteq S_i$.
\end{enumerate}
\end{lemma}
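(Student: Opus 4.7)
The plan is to reduce the optimization problem to a maximum-weight bipartite matching problem whose optimal value equals the optimum of the colored-induced-subgraph problem, and then invoke the Hungarian algorithm.

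\textbf{Construction of the auxiliary bipartite graph.} First I would build a bipartite graph $B$ as follows. The left side is $X:=\bigcup_{i\in[r]}S_i$ (each vertex of $G$ appearing in this union is included exactly once). For the right side, using that each $S_i\in\mathcal{S}$ induces a $P_3$-free subgraph, I note that every connected component of $G[S_i]$ is a clique; let $\mathcal{C}_i$ denote the set of these components. I would then take the right side to be $Y:=\{(i,C):i\in[r],\ C\in\mathcal{C}_i\}$. The edge set is $E(B):=\{\,\{v,(i,C)\}:v\in C,\ i\in L(v)\,\}$ and I assign weight $w(v)$ to every edge incident to $v\in X$. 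Since $|X|\le|V(G)|$ and $|Y|\le r|V(G)|$, the graph $B$ has $O((r+1)|V(G)|)$ vertices, so a maximum-weight matching of $B$ can be computed in $O(((r+1)|V(G)|)^{3})$ time by the Hungarian method.

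\textbf{From matchings to colorings.} Given a maximum-weight matching $M$ of $B$, I would set $F$ to be the set of vertices of $X$ saturated by $M$, and define $\phi\colon F\to[r]$ by $\phi(v)=i$ where $(i,C)$ is the unique right-vertex matched to $v$. Properties \ref{itm:property1} and \ref{itm:property2} follow directly from the definition of $E(B)$. To verify that $\phi$ is a $(d,r)$-coloring of $G[F]$, consider distinct $u,v\in F$ with $\phi(u)=\phi(v)=i$. The matching assigns them to distinct right-vertices $(i,C_u)$ and $(i,C_v)$, so $C_u\neq C_v$ are distinct components of $G[S_i]$. By the third property of a distance-$d$ amiable family, $d_G(C_u,C_v)\ge d$, hence $d_G(u,v)\ge d$, as required.

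\textbf{From colorings to matchings.} Conversely, given any feasible $(F^{*},\phi^{*})$ for the lemma, I would define a matching $M^{*}$ in $B$ by pairing each $v\in F^{*}$ with the vertex $(\phi^{*}(v),C_{v})$, where $C_{v}$ is the component of $G[S_{\phi^{*}(v)}]$ containing $v$ (well-defined by \ref{itm:property2}, and an edge of $B$ by \ref{itm:property1}). If two vertices $u\neq v$ of $F^{*}$ were mapped to the same right-vertex $(i,C)$, then $u,v$ would both lie in the clique $C$ and satisfy $\phi^{*}(u)=\phi^{*}(v)=i$; but then $uv\in E(G)$, contradicting that $\phi^{*}$ is a $(d,r)$-coloring with $d\ge2$. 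Thus $M^{*}$ is a valid matching of weight $\sum_{v\in F^{*}}w(v)=w(F^{*})$. Combining the two directions, the weight of a maximum matching of $B$ equals the optimum of the lemma, and the induced subgraph $H:=G[F]$ extracted from $M$ is a solution; outputting $H$ completes the algorithm.

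\textbf{Main obstacle.} The substantive step is the backwards verification: ensuring that every feasible coloring $(F^{*},\phi^{*})$ indeed yields a legal matching (no two distinct elements of $F^{*}$ of the same color share a component). This is exactly where the $P_3$-freeness of $G[S_i]$ (giving clique components) combines with the requirement $d\ge 2$ of a $(d,r)$-coloring; hence the auxiliary graph $B$ and the coloring problem are genuinely equivalent. Distances between different components of $G[S_i]$ are only needed in the forward direction (to certify the $(d,r)$-coloring), which is precisely the extra strength provided by the distance-$d$ amiable family beyond the ordinary ($d=2$) case.
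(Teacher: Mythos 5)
Your proof is correct and follows essentially the same route as the paper: both reduce the problem to a maximum-weight matching between the vertices of $\bigcup_{i\in[r]}S_i$ and the clique components of the graphs $G[S_i]$, prove the two-directional equivalence of matchings and valid colorings (using that components are cliques for one direction and that they are pairwise at distance at least $d$ for the other), and invoke the Hungarian method on a bipartite graph with at most $(r+1)|V(G)|$ vertices. The only difference is cosmetic: you encode the list constraint $i\in L(v)$ directly in the edge set of the auxiliary graph, whereas the paper first preprocesses each $S_i$ by deleting vertices $v$ with $i\notin L(v)$.
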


\begin{proof}
Consider an $r$-tuple $(S_1,\ldots,S_r) \in \mathcal{S}^r$. For every $i \in [r]$, let $c_i$ be the number of connected components of $G[S_i]$ and let $S_i^1,\ldots,S_i^{c_i}$ be an arbitrary ordering of the connected components of $G[S_i]$. By definition of distance-$d$ amiable family, each such connected component of $G[S_i]$ is a clique and any two of them are pairwise at distance at least $d$ in $G$.

The first step of our algorithm consists in preprocessing the graph $G[\bigcup_{i \in [r]} S_i]$ as follows. For every $i \in [r]$, if there exists a vertex $v \in S_i$ such that $i \notin L(v)$, then we remove $v$ from $S_i$, that is, we set $S_i = S_i \setminus \{v\}$. Observe that this preprocessing is safe. Indeed, if $H$ is an induced subgraph of $G[\bigcup_{i \in [r]} S_i]$ for which there exists a $(d,r)$-coloring $\phi$ satisfying both \ref{itm:property1} and \ref{itm:property2}, then surely $\phi(v) \neq i$ if $v \in V(H)$.

We next show that finding such an induced subgraph of $G[\bigcup_{i \in [r]} S_i]$ of maximum weight boils down to finding a maximum-weight matching in an auxiliary weighted bipartite graph $B$ constructed as follows. The graph $B$ has bipartition $X \cup Y$ and edge set $E(B)$, where 
\[X = \Big\{x_{S_i^j} : i\in [r], j\in [c_i] \Big\}, \ Y = \Big\{y_{v} : v\in \bigcup_{i \in [r]} S_i\Big\} \ \mbox{and} \  
E(B) = \bigcup_{i=1}^{r} \bigcup_{j=1}^{c_i} \Big\{y_{v}x_{S_i^j} \colon v\in S_i^{j} \Big\}.\]
Moreover, for each $v\in \bigcup_{i \in [r]} S_i$, every edge of $B$ incident to $y_v$ is assigned the weight $w(v)$.
Note that $|V(B)| = |\bigcup_{i \in [r]} S_i| + \sum_{i \in [r]} c_i \leq |V(G)| + r|V(G)| = (r+1)|V(G)|$.

\clm{Let $m \in \mathbb{Q}_+.$ The graph $G[\bigcup_{i \in [r]} S_i]$ has an induced subgraph $H$ with $w(V(H)) = m$ and which admits a $(d,r)$-coloring satisfying both \ref{itm:property1} and \ref{itm:property2} if and only if $B$ has a matching of weight $m$.}\label{clm:bipartite-const}

\begin{claimproof}[Proof of \eqref{clm:bipartite-const}]
Suppose first that $H$ is an induced subgraph of $G[\bigcup_{i \in [r]} S_i]$ with $w(V(H))=m$ and which admits a $(d, r)$-coloring $\phi$ satisfying both \ref{itm:property1} and \ref{itm:property2}. By \ref{itm:property2}, for every $i \in [r]$, each vertex $v \in V(H)$ with $\phi(v) = i$ belongs to $S_i$. But since each connected component of $G[S_i]$ is a clique, vertices in $H$ assigned color $i$ under $\phi$ belong to distinct connected components of $G[S_i]$. For every $i \in [r]$, let $M_i \subseteq [c_i]$ be the set of indices $j$ such that $S_i^j \cap \{v \in V(H) \colon \phi(v) = i\} \neq \varnothing$ (note that, by the previous remark, this intersection has size exactly one). Moreover, for every $j \in M_i$, let $v_i^j$ be the unique vertex in $S_i^j \cap \{v \in V(H) \colon \phi(v) = i\}$. It is not difficult to see that the set of edges
$$M = \bigcup_{i \in [r]} \bigcup_{j \in M_i} \{y_{v_i^j}x_{S_i^j}\}$$
is a matching in $B$ of weight $m$. 

Conversely, suppose that $B$ has a matching $M$ of weight $m$. Let $V \subseteq \bigcup_{i \in [r]} S_i$ be the set of vertices $v$ such that $y_v$ is incident to an edge in $M$. For every $i \in [r]$, let $V_i = \{v \in V \colon \exists j \in [c_i], y_vx_{S_i^j} \in M\}$. Note that, by construction of $B$, for every $v \in V$ there exists exactly one index $i \in [r]$ such that $v \in V_i$, and so $(V_1, \ldots, V_r)$ is a partition of $V$. Furthermore, for any $i \in [r]$ and any two distinct vertices $u,v \in V_i$, there exist two distinct indices $j,p \in [c_i]$ such that $y_vx_{S_i^j},y_ux_{S_i^p} \in M$; in particular, $u$ and $v$ belong to distinct connected components of $G[S_i]$ and so $d_{G}(u,v) \geq d$. It follows that the map $\phi\colon V \to [r]$ defined by $\phi(v) = i$ for every $v \in V_i$ and $i \in [r]$ is a $(d,r)$-coloring of $G[V]$. Moreover, for every $i \in [r]$, we have that $V_i = \{v \in V \colon \phi(v) = i\} \subseteq S_i$ by construction, which implies that for every $v \in V_i$, the color $i$ belongs to $L(v)$ thanks to the preprocessing step. Thus, the induced subgraph $G[V]$ of $G[\bigcup_{i \in [r]} S_i]$ has a $(d,r)$-coloring $\phi$ satisfying both \ref{itm:property1} and \ref{itm:property2}. Moreover, $w(V) = \sum_{i \in [r]} w(V_i) = w(M) = m$. 
\end{claimproof}

Now, by (\autoref{clm:bipartite-const}), our problem reduces to finding a maximum-weight matching in the bipartite graph $B$, which can be done in $O(|V(B)|^{3})$-time using the Hungarian method (see, e.g., \cite[Theorem~17.3]{Sch}. Since $|V(B)| \leq (r+1)|V(G)|$, this completes the proof of \autoref{lem:good-graphs}.
\end{proof}

We can finally prove \Cref{thm:main}, which we restate for convenience. 

\main*

\begin{proof}
Given a $kP_3$-free graph $G$ and an $r$-list assignment $L$ of $G$, we show that the following algorithm outputs, in polynomial time, an induced subgraph $H$ of $G$ admitting a coloring that respects $L$ and such that $H$ is of maximum weight among all such subgraphs of $G$. 

\medskip

\begin{enumerate}[label=\textbf{Step \arabic*.},ref=\arabic*,leftmargin=*]
	\item Compute an amiable family $\mathcal{S}$ of $G$ of size $|V(G)|^{O(k)}$.
	\item For each $r$-tuple $(S_1,\ldots,S_r) \in \mathcal{S}^r$, find a maximum-weight induced subgraph $H$ of $G[\bigcup_{i \in [r]} S_i]$ which admits a coloring $\phi\colon V(H) \to [r]$ satisfying \ref{itm:property1} and \ref{itm:property2} of \autoref{lem:good-graphs}.
	\item Among all induced subgraphs computed in Step $2$, output one of maximum weight.
\end{enumerate}

\medskip

We first show that the algorithm is correct. To see this, observe that if $H$ is an induced subgraph of $G$ and $\phi\colon V(H) \to [r]$ is a coloring of $H$ such that $\phi(v) \in L(v)$ for every $v \in V(H)$, then each color class $C_i = \{v \in V(H) \colon \phi(v) = i\}$ is an independent set of $G$. Hence, since $\mathcal{S}$ is an amiable family of $G$, for every $i \in [r]$ there exists $S_i \in \mathcal{S}$ such that $C_i \subseteq S_i$. The correctness of the algorithm then follows from \autoref{lem:amiablefam} and \autoref{lem:good-graphs}, where we set $d=2$.

Consider now the running time. By \autoref{lem:amiablefam}, an amiable family $\mathcal{S}$ of $G$ of size $|V(G)|^{O(k)}$ as in Step $1$ can be computed in time $|V(G)|^{O(k)}$. Observe now that, in Step $2$, the number of $r$-tuples to consider is at most $|\mathcal{S}|^{r} = |V(G)|^{O(rk)}$. Moreover, for each fixed $r$-tuple, a maximum-weight induced subgraph $H$ of $G[\bigcup_{i \in [r]} S_i]$ with the desired properties can be computed in time $O(((r+1)|V(G)|)^{3})$ thanks to \Cref{lem:good-graphs}. Therefore, the total running time is $|V(G)|^{O(rk)}$.
\end{proof}


\section{Distance-$d$ amiable families: The proofs of \Cref{lem:dist-amiablefam} and \Cref{thm:distanceCol}}\label{sec:distancedamiable}

In this section we first prove \Cref{lem:dist-amiablefam}, which will then be used for the proof of \Cref{thm:distanceCol}. To make our inductive proof of \Cref{lem:dist-amiablefam} work, we show in fact something more general, which requires the following definition. 

Given a graph $G$ and a subset $F \subseteq V(G)$, a subset $S \subseteq V(G)$ is \emph{$F$-avoiding} if $S \cap F = \varnothing$. By extension, a family $\mathcal{S} \subseteq 2^{V(G)}$ is \textit{$F$-avoiding} if each member of $\mathcal{S}$ is $F$-avoiding. For a graph $G$, a family $\mathcal{S} \subseteq 2^{V(G)}$ of subsets of $V(G)$ is an \textit{$F$-avoiding distance-$d$ amiable family} if it satisfies the following properties:
\begin{itemize}
    \item $\mathcal{S}$ is $F$-avoiding;
    \item Each member of $\mathcal{S}$ induces a $P_3$-free subgraph in $G$;
    \item For each $S \in \mathcal{S}$, the connected components of $G[S]$ are pairwise at distance at least $d$ in $G$;
    \item Each (inclusion-wise) maximal $F$-avoiding distance-$d$ independent set of $G$ is contained in some member of $\mathcal{S}$.
\end{itemize}

Note that a distance-$d$ amiable family of $G$ is nothing but an $F$-avoiding distance-$d$ amiable family of $G$ for $F = \varnothing$. 
As we shall see, our proof of \Cref{lem:dist-amiablefam} in fact shows that, for every $kP_3$-free graph $G$ and every $F \subseteq V(G)$, the graph $G$ admits an $F$-avoiding distance-$d$ amiable family of size $|V(G)|^{O(k)}$ and which can be computed in time $|V(G)|^{O(k)}$. 

Before we continue to the proof of \Cref{lem:dist-amiablefam}, let us discuss why our approach fails for $3 \leq d \leq 5$ (recall however that the failure for $d\in\{3,5\}$ is to be expected given the hardenss results in \Cref{eto}).
The algorithm for computing a distance-$d$ amiable family is, in essence, similar to the one for computing an amiable family (so $d = 2$): it enumerates all $P_3$'s $uvw$ in the graph, ``guesses'' which vertex in the path is in the independent set and recursively calls the algorithm on (roughly) the anti-neighborhood of $\{u,v,w\}$, which is $(k-1)P_3$-free. The main difference (and difficulty) is in ensuring that the family computed recursively satisfies the distance requirement: 
the connected components of each member in this family could in principle be closer in the original graph.
Consider, for instance, the case where $u$ is picked in the independent set, and there are four connected components $Q_1, Q_2, P_1, P_2$ in a member of the recursively computed family with shortest paths between $Q_1$ and $Q_2$, and $P_1$ and $P_2$ as shown in \Cref{fig:recursive}. 
Then, for $d \leq 5$, $Q_1$ and $Q_2$ can end up at a distance less than $d$, while $P_1$ and $P_2$ are at a distance at least $d$. Since there is a priori no way of ``distinguishing'' these two cases, we have to take $d$ ``large enough''. The precise argument appears in the proof of~\Cref{lem:dist-amiablefam}, Claim (\ref{clm:damiabilityP3}).

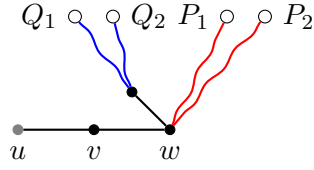
\begin{figure}
\captionsetup{width=\linewidth}
\begin{tikzpicture}
    \node[circg,label=below:{\small $u$}] (u) at (0,0) {};
    \node[circ,label=below:{\small $v$}] at (1,0) {};
    \node[circ,label=below:{\small $w$}] (w) at (2,0) {};
    \node[circ] (a) at (1.5,.5) {};
    \draw[thick] (u) -- (w);
    \draw[thick] (w) -- (a);

    \node[circb,label=left:{\small $Q_1$}] (Q1) at (.75,1.5) {};
    \node[circb,label=right:{\small $Q_2$}] (Q2) at (1.25,1.5) {};
    \draw[nonedge,blue,thick] (Q1) -- (a);
    \draw[nonedge,blue,thick] (a) -- (Q2);

    \node[circb,label=left:{\small $P_1$}] (P1) at (2.75,1.5) {};
    \node[circb,label=right:{\small $P_2$}] (P2) at (3.25,1.5) {};
    \draw[nonedge,red,thick] (w) -- (P1);
    \draw[nonedge,red,thick] (P2) -- (w);
\end{tikzpicture}
\caption{The case $d \leq 5$ (paths in blue are of length $d-3$, those in red, $d-2$).}
\label{fig:recursive}
\end{figure}

We are finally ready to prove \Cref{lem:dist-amiablefam}, restated below for convenience. 

\distamiablefam*

\begin{proof} To prove the lemma, we provide, for every $d \geq 6$ and every $k \geq 1$, an algorithm $\Lambda_k^d$ which takes as input a $kP_3$-free graph $G$, together with an arbitrary ordering of $V(G)$, and a subset $F \subseteq V(G)$, and outputs an $F$-avoiding distance-$d$ amiable family $\Lambda_k^d(G,F)$ of $G$. The pseudo-code of the algorithm is given in \Cref{algo:Lambda_k}. Note that, if the input graph $G$ is the empty graph, $\Lambda_k^d$ correctly outputs $\Lambda_k^d(G) = \{\varnothing\}$. 

In the following, we prove three claims (\eqref{clm:Favoiding}, \eqref{clm:damiabilityP3} and \eqref{clm:damiabilityIS}) which altogether show that, for every $d \geq 6$ and every $k \geq 1$, if $G$ is a $kP_3$-free graph and $F \subseteq V(G)$, then the family $\Lambda_k^d(G,F)$ is indeed an $F$-avoiding distance-$d$ amiable family. We will then show that $\Lambda_k^d(G,F)$ has size $|V(G)|^{O(k)}$ and that the algorithm  $\Lambda_k^d$ has running time $|V(G)|^{O(k)}$. Taking $F = \varnothing$, will prove the lemma.

Given a graph $G$ on $n$ vertices and an ordering $v_1,\ldots,v_n$ of $V(G)$, we let $G_i = G[\{v_1,\ldots,v_i\}]$. Moreover, an induced path in $G$ with vertex set $\{x_1, x_2, \ldots, x_{\ell}\}$ and edge set $\{x_1x_2, x_2x_3, \ldots x_{\ell-1}x_{\ell}\}$ is denoted by listing its vertices in the natural order $x_1x_2\cdots x_{\ell}$.

\begin{algorithm}{}
\caption{$\Lambda_k^d$}
\label{algo:Lambda_k}
\begin{algorithmic}[1]
\Require A $kP_3$-free graph $G$ with an arbitrary ordering $v_1, \ldots, v_n$ of $V(G)$, and a subset $F \subseteq V(G)$. 
\Ensure An $F$-avoiding distance-$d$ amiable family of $G$.
\State Initialize $\mathcal{S} = \{\varnothing\}$.
\For{$i = 1, \dots, n$}\label{mainloop}

\If{$v_i \notin F$}

\For{every member $S \in \mathcal{S}$}
\If{$G[S\cup \{v_i\}]$ is $P_3$-free and its connected components are pairwise at distance at least $d$ in $G$}
\State\label{step2.1} Set $S = S \cup \{v_i\}$.
\EndIf
\EndFor

\For{every induced $P_3$ $uvw$ in $G_i$ such that $u \notin F$} 
\State Compute the family $\mathcal{C} := \Lambda_{k-1}^d(G[N_G^{\geq 4}(u)], (F \cap N_G^{\geq 4}(u)) \cup (N_G^{\geq 4}(u) \cap N_G^{\leq d-1}(u)))$. 
\For{every $C \in \mathcal{C}$} 
\State\label{step2.2.1} Set $\mathcal{S} = \mathcal{S} \cup \{C \cup \{u\}\}$.
\EndFor
\EndFor

\For{every induced $P_3$ $uvw$ in $G_i$ such that $v \notin F$}
\State Compute the family $\mathcal{C} := \Lambda_{k-1}^d(G[N_G^{\geq 4}(v)], (F \cap N_G^{\geq 4}(v)) \cup (N_G^{\geq 4}(v) \cap N_G^{\leq d-1}(v)))$. 
\For{every $C \in \mathcal{C}$} 
\State\label{step2.2.2} Set $\mathcal{S} = \mathcal{S} \cup \{C \cup \{v\}\}$.
\EndFor
\EndFor

\EndIf

\EndFor
\State \Return $\Lambda_k^d(G,F) := \mathcal{S}$.
\end{algorithmic}
\end{algorithm}

\clm{For every $d \geq 6$ and every $k \geq 1$, if $G$ is a $kP_3$-free graph and $F \subseteq V(G)$, then $\Lambda_k^d(G,F)$ is $F$-avoiding.}\label{clm:Favoiding}

\begin{claimproof}[Proof of \eqref{clm:Favoiding}]
For fixed $d \geq 6$, we proceed by induction on $k$. The statement holds for $k=1$, since for every $P_3$-free graph $G$ and every $F \subseteq V(G)$, $\Lambda_1^d(G) = \{V(G) \setminus F\}$. Suppose now that $k > 1$ and that the statement holds for $k-1$. Let $G$ be an $n$-vertex $kP_3$-free graph and let $F \subseteq V(G)$ as in input. For every $i \in [n]$, let $\mathcal{S}^i$ be the state of the family $\mathcal{S}$ at the end of the $i$-th iteration of the main loop. We show by induction on $i$ that, for every $S \in \mathcal{S}^i$, $S \cap F = \varnothing$. The base case $i=1$ trivially holds, since either $v_1 \in F$ in which case $\mathcal{S}^1 = \{\varnothing\}$, or $v_1 \notin F$ in which case $\mathcal{S}^1 = \{\{v_1\}\}$. Thus, suppose that $i >1$ and that the statement holds for $i-1$.

Consider $S \in \mathcal{S}^i$. If $S \in \mathcal{S}^{i-1}$ then, by the induction hypothesis, $S \cap F = \varnothing$. Thus, we may assume that $S \notin \mathcal{S}^{i-1}$. This implies that $S$ is added to $\mathcal{S}^i$ in one of the three inner loops during the $i$-th iteration of the main loop. Suppose first that $S$ is added in line \ref{step2.1} as an extension of a set $Q \in \mathcal{S}^{i-1}$. Hence, by construction, $v_i \notin F$ and, by the induction hypothesis, $Q \cap F = \varnothing$, which implies that $S \cap F = \varnothing$. Suppose next that $S$ is added in line \ref{step2.2.1}. Hence, there exist an induced $P_3$ $uvw$ in $G_i$ such that $u \notin F$ and a set $C \in \Lambda_{k-1}^d(G[N_G^{\geq 4}(u)], (F \cap N_G^{\geq 4}(u)) \cup (N_G^{\geq 4}(u) \cap N_G^{\leq d-1}(u)))$ such that $S = C \cup \{u\}$. Observe now that since $G$ is $kP_3$-free and $(N_G(v) \cup N_G(w)) \cap N_G^{\geq 4}(u) = \varnothing$, the graph $G[N_G^{\geq 4}(u)]$ is $(k-1)P_3$-free, and thus, by the induction hypothesis on $k-1$, each member of $\Lambda_{k-1}^d(G[N_G^{\geq 4}(u)], (F \cap N_G^{\geq 4}(u)) \cup (N_G^{\geq 4}(u) \cap N_G^{\leq d-1}(u)))$ has empty intersection with $(F \cap N_G^{\geq 4}(u)) \cup (N_G^{\geq 4}(u) \cap N_G^{\leq d-1}(u))$. Since $C \subseteq N_G^{\geq 4}(u)$, it follows that $C \cap F = C \cap (F \cap N_G^{\geq 4}(u)) = \varnothing$ and so $S \cap F = \varnothing$. We conclude similarly if $S$ is added in line \ref{step2.2.2}.
\end{claimproof}

\clm{For every $d \geq 6$ and $k \geq 1$, if $G$ is a $kP_3$-free graph, $F \subseteq V(G)$, and $S \in \Lambda_k^d(G,F)$, then $G[S]$ is $P_3$-free and its connected components are pairwise at distance at least $d$ in $G$.}\label{clm:damiabilityP3}

\begin{claimproof}[Proof of \eqref{clm:damiabilityP3}]
For fixed $d \geq 6$, we proceed by induction on $k$. The statement holds for $k=1$, since for every $P_3$-free graph $G$ and every $F \subseteq V(G)$, $\Lambda_1^d(G) = \{V(G) \setminus F\}$. Suppose now that $k > 1$ and that the statement holds for $k-1$. Let $G$ be an $n$-vertex $kP_3$-free graph and let $F \subseteq V(G)$ as in input. For every $i \in [n]$, let $\mathcal{S}^i$ be the state of the family $\mathcal{S}$ at the end of the $i$-th iteration of the main loop. We show by induction on $i$ that each member of $\mathcal{S}^i$ induces a $P_3$-free subgraph of $G$ whose connected components are pairwise at distance at least $d$ in $G$. The base case $i = 1$ trivially holds, since either $v_1 \in F$ in which case $\mathcal{S}^1 = \{\varnothing\}$, or $v_1 \notin F$ in which case $\mathcal{S}^1 = \{\{v_1\}\}$. Thus, suppose that $i > 1$ and that the statement holds for $i-1$.

Consider $S \in \mathcal{S}^i$. If $S \in \mathcal{S}^{i-1}$ then, by the induction hypothesis, $G[S]$ is $P_3$-free and its connected components are pairwise at distance at least $d$ in $G$. Thus, we may assume that $S \notin \mathcal{S}^{i-1}$. This implies that $S$ is added to $\mathcal{S}^i$ in one of the three inner loops during the $i$-th iteration of the main loop. If $S$ is added in line \ref{step2.1} as an extension of a member of $\mathcal{S}^{i-1}$, then the statement holds by construction. Suppose next that $S$ is added to $\mathcal{S}^i$ in line \ref{step2.2.1}. Hence, there exist an induced $P_3$ $uvw$ in $G_i$ such that $u \notin F$ and a set $C \in \Lambda_{k-1}^d(G[N_G^{\geq 4}(u)], (F \cap N_G^{\geq 4}(u)) \cup (N_G^{\geq 4}(u) \cap N_G^{\leq d-1}(u)))$ such that $S = C \cup \{u\}$. Observe now that since $G$ is $kP_3$-free and $(N_G(v) \cup N_G(w)) \cap N_G^{\geq 4}(u) = \varnothing$, the graph $G[N_G^{\geq 4}(u)]$ is $(k-1)P_3$-free, and thus, by the induction hypothesis on $k-1$, every member of $\Lambda_{k-1}^d(G[N_G^{\geq 4}(u)], (F \cap N_G^{\geq 4}(u)) \cup (N_G^{\geq 4}(u) \cap N_G^{\leq d-1}(u)))$ induces a $P_3$-free subgraph of $G$ whose connected components are pairwise at distance at least $d$ in $G[N_G^{\geq 4}(u)]$. We claim that then, the connected components of $G[C]$ are in fact pairwise at distance at least $d$ in $G$. Suppose, to the contrary, that there exist two connected components $Q_1$ and $Q_2$ of $G[C]$ such that $d_G(Q_1,Q_2) \leq d-1$. 
Since the distance in $G[N_G^{\geq 4}(u)]$ between $Q_1$ and $Q_2$ is at least $d$, every shortest path in $G$ from $Q_1$ to $Q_2$ contains at least one vertex of $V(G) \setminus N_G^{\geq 4}(u)$; let $P$ be such a shortest path and let $x \in V(G) \setminus N_G^{\geq 4}(u)$ be an arbitrary vertex on $P$. Clearly, $d_G(u,x) \leq 3$. Now, by \eqref{clm:Favoiding}, $C \cap ((F \cap N_G^{\geq 4}(u)) \cup (N_G^{\geq 4}(u) \cap N_G^{\leq d-1}(u))) = \varnothing$ which implies, in particular, that $C \subseteq N_G^{\geq d}(u)$. It follows that for $j \in  [2]$,
\[
d \leq d_G(u,Q_j) \leq d_G(u,x) + d_G(x,Q_j) \leq 3 + d_G(x,Q_j)
\]
and thus,
\[
d_G(Q_1,Q_2) = d_G(Q_1,x) + d_G(x,Q_2) \geq 2(d-3).
\]
Since, by assumption, $d_G(Q_1,Q_2) \leq d-1$, we conclude that $2(d-3) \leq d-1$, that is, $d \leq 5$, a contradiction. Thus, the connected components of $G[C]$ are pairwise at distance at least $d$ in $G$. Since $C \subseteq N_G^{\geq d}(u)$, as previously observed, and $G[C]$ is $P_3$-free, it follows that $S = C \cup \{u\}$ induces a $P_3$-free graph whose connected components are pairwise at distance at least $d$ in $G$. We conclude similarly in the case where $S$ is added to $\mathcal{S}^i$ in line \ref{step2.2.2}.
\end{claimproof}

\clm{For every $d \geq 6$ and $k \geq 1$, if $G$ is a $kP_3$-free graph and $F \subseteq V(G)$, then every $F$-avoiding distance-$d$ independent set of $G$ is contained in a member of $\Lambda_k^d(G,F)$.}\label{clm:damiabilityIS}

\begin{claimproof}[Proof of \eqref{clm:damiabilityIS}]
For fixed $d \geq 6$, we proceed by induction on $k$. The statement clearly holds for $k = 1$, since for every $P_3$-free graph $G$ and every $F \subseteq V(G)$, $\Lambda_1^d(G,F) = \{V(G) \setminus F\}$. Suppose now that $k > 1$ and that the statement holds for $k-1$. Let $G$ be a $kP_3$-free graph and let $F \subseteq V(G)$ as in input. For every $i \in [n]$, let $\mathcal{S}^i$ be the state of the family $\mathcal{S}$ at the end of the $i$-th iteration of the main loop. Furthermore, given a distance-$d$ independent set $I$ of $G$ such that $I \subseteq V(G_i)$, we say that $I$ is \emph{$G_i$-compatible} if for every $u\in I$ and every connected component $Q$ of $G_i$ such that $u$ is connected to $Q$ in $G$, there exists a shortest $u, Q$-path $P_{u,Q}$ in $G$ such that $N_G^{\leq 2}(u) \cap V(P_{u,Q}) \subseteq V(G_i)$. We now show, by induction on $i$, that for every $G_i$-compatible $F$-avoiding distance-$d$ independent set $I$ of $G$ such that $I \subseteq V(G_i)$, there exists $S \in \mathcal{S}^i$ such that $I \subseteq S$. Note that this is enough to prove our statement for $k$, since any $F$-avoiding distance-$d$ independent set of $G = G_n$ is surely $G_n$-compatible. 

The base case $i = 1$ trivially holds, since either $v_1 \in F$ and $\mathcal{S}^1 = \{\varnothing\}$ in which case $\varnothing$ is the only $G_1$-compatible $F$-avoiding distance-$d$ independent of $G_1$, or $v_1 \notin F$ and $\mathcal{S}^1 = \{\{v_1\}\}$ in which case $\{v_1\}$ is the only maximal $G_1$-compatible $F$-avoiding distance-$d$ independent set of $G_1$. Thus, suppose that $i > 1$ and that the statement holds for $i-1$.

Consider a $G_i$-compatible $F$-avoiding distance-$d$ independent set $I$ of $G$ such that $I \subseteq V(G_i)$. If $I \subseteq V(G_{i-1})$ and $I$ is $G_{i-1}$-compatible then, by the induction hypothesis, there exists $S \in \mathcal{S}^{i-1}$ such that $I \subseteq S$. Observe now that, by construction, there then exists $S' \in \mathcal{S}^i$ such that $S \subseteq S'$ and hence $I \subseteq S'$. Thus, assume that this does not hold. This implies that either $I \not\subseteq V(G_{i-1})$, or $I \subseteq V(G_{i-1})$ but $I$ is not $G_{i-1}$-compatible. We distinguish these two cases and argue that we can always find $S \in \mathcal{S}^i$ such that $I \subseteq S$.

\textbf{Case 1.} $I \not\subseteq V(G_{i-1})$. 

\noindent Hence, $v_i \in I$. Note that since $I$ is $F$-avoiding, $v_i \notin F$. We claim that then $I \setminus \{v_i\}$ is $G_{i-1}$-compatible. Indeed, since $I$ is $G_i$-compatible, for every $u \in I \setminus \{v_i\}$ and every connected component $Q$ of $G_i$ such that $u$ is connected to $Q$ in $G$, there exists a shortest $u, Q$-path $P_{u,Q}$ in $G$ such that $N_G^{\leq 2}(u) \cap V(P_{u,Q}) \subseteq V(G_i)$. But $v_i$ is at distance at least $d \geq 6$ from $u$ in $G$ and so $v_i \notin (N_G^{\leq 2}(u) \cap V(P_{u,Q}))$, that is, $N_G^{\leq 2}(u) \cap V(P_{u,Q}) \subseteq V(G_{i-1})$ which proves our claim. Now, by the induction hypothesis, there exists $S \in \mathcal{S}^{i-1}$ such that $I \setminus \{v_i\} \subseteq S$. If $G[S \cup \{v_i\}]$ is $P_3$-free and its connected components are pairwise at distance at least $d$ in $G$, then $S \cup \{v_i\} \in \mathcal{S}^i$ by construction (cf. line \ref{step2.1}) and $I \subseteq S \cup \{v_i\}$. Thus, we may assume that this does not hold. Hence, either $G[S \cup \{v_i\}]$ contains at least one induced $P_3$, or $G[S \cup \{v_i\}]$ is $P_3$-free but at least two of its connected components are at distance strictly less than $d$ in $G$.

\textbf{Case 1.1.} $G[S \cup \{v_i\}]$ contains at least one induced $P_3$. 

\noindent By \eqref{clm:damiabilityP3}, $G[S]$ is $P_3$-free and thus, any induced $P_3$ in $G[S \cup \{v_i\}]$ must contain $v_i$. Similarly to the proof of \eqref{amiablefam:IS}, we distinguish two cases. Suppose first that there exist a connected component $Q$ of $G[S]$ and a vertex $u \in Q$ such that $v_i$ is adjacent to $u$ but not complete to $Q$, say $w \in Q$ is nonadjacent to $v_i$. Note that at some point during the $i$-th iteration of the main loop, the induced $P_3$ $v_iuw$ is considered in the second inner loop, since $v_i \notin F$. Furthermore, since $G$ is $kP_3$-free and $(N_G(u) \cup N_G(w)) \cap N_G^{\geq 4}(v_i) =\varnothing$, the graph $G[N_G^{\geq 4}(v_i)]$ is $(k-1)P_3$-free. Thus, by the induction hypothesis on $k-1$ and since $I \setminus \{v_i\} \subseteq N_G^{\geq d}(v_i) \setminus F$, there exists $C \in \Lambda_{k-1}^d(G[N_G^{\geq 4}(v_i)],(F \cap N_G^{\geq 4}(v_i)) \cup (N_G^{\geq 4}(v_i) \cap N_G^{\leq d-1}(v_i)))$ such that $I \setminus \{v_i\} \subseteq C$. But then, $C \cup \{v_i\} \in \mathcal{S}^i$ and $I \subseteq C \cup \{v_i\}$. The case where, for every connected component $Q$ of $G[S]$ the vertex $v_i$ is either complete or anticomplete to $Q$, is handled similarly.

\textbf{Case 1.2.} $G[S \cup \{v_i\}]$ is $P_3$-free but at least two of its connected components are at distance strictly less than $d$ in $G$.

\noindent By \eqref{clm:damiabilityP3}, the connected components of $G[S]$ are pairwise at distance at least $d$ in $G$ and so there must exist a connected component $Q$ of $G[S \cup \{v_i\}]$ at distance strictly less than $d$ in $G$ to the connected component of $G[S \cup \{v_i\}]$ containing $v_i$. Now, since $v_i$ is connected to $Q$ in $G$ and $v_i \in I$, the assumption that $I$ is $G_i$-compatible implies that there exists a shortest $v_i,Q$-path $P_{v_i,Q}$ in $G$ such that $N_G^{\leq 2}(v_i) \cap V(P_{v_i,Q}) \subseteq V(G_i)$. Let $u_1,u_2 \in V(G_i)$ be the two vertices on $P_{v_i,Q}$ at distance one and two, respectively, from $v_i$. Note that at some point during the $i$-th iteration of the main loop, the induced $P_3$ $v_iu_1u_2$ is considered in the second inner loop, since $v_i \notin F$. Moreover, since $G$ is $kP_3$-free and $(N_G(u_1) \cup N_G(u_2)) \cap N_G^{\geq 4}(v_i) = \varnothing$, the graph $G[N_G^{\geq 4}(v_i)]$ is $(k-1)P_3$-free. Thus, by the induction hypothesis on $k-1$ and since $I \subseteq N_G^{\geq d}(v_i) \setminus F$, there exists $C \in \Lambda_{k-1}^d(G[N_G^{\geq 4}(v_i)],(F \cap N_G^{\geq 4}(v_i)) \cup (N_G^{\geq 4}(v_i) \cap N_G^{\leq d-1}(v_i)))$ such that $I \setminus \{v_i\} \subseteq C$. But then, $C \cup \{v_i\} \in \mathcal{S}^i$ and $I \subseteq C \cup \{v_i\}$.

\textbf{Case 2.} $I \subseteq V(G_{i-1})$ but $I$ is not $G_{i-1}$-compatible. 

\noindent Hence, there must exist $u \in I$ and a connected component $Q$ of $G_i$ to which $u$ is connected in $G$ such that $v_i \in N_G^{\leq 2}(u) \cap V(P_{u,Q})$, where $P_{u,Q}$ is a shortest path in $G$ from $u$ to $Q$ given by the $G_i$-compatibility of $I$. Let $u_1,u_2 \in V(G_i)$ be the vertices on $P_{u,Q}$ at distance one and two, respectively, from $u$. Note that at some point during the $i$-th iteration of the main loop, the induced $P_3$ $uu_1u_2$ is considered in the second inner loop, since $v_i \notin F$. Moreover, since $G$ is $kP_3$-free and $(N_G(u_1) \cup N_G(u_2)) \cap N_G^{\geq 4}(u) = \varnothing$, the graph $G[N_G^{\geq 4}(u)]$ is $(k-1)P_3$-free. Thus, by the induction hypothesis on $k-1$ and since $I \subseteq N_G^{\geq d}(u) \setminus F$, there exists $C \in \Lambda_{k-1}^d(G[N_G^{\geq 4}(u)],(F \cap N_G^{\geq 4}(u)) \cup (N_G^{\geq 4}(u) \cap N_G^{\leq d-1}(u)))$ such that $I \setminus \{u\} \subseteq C$. But then, $C \cup \{u\} \in \mathcal{S}^i$ and $I \subseteq C \cup \{u\}$.

Since in all cases we found $S \in \mathcal{S}^i$ such that $I \subseteq S$, this concludes the proof of \eqref{clm:damiabilityIS}. 
\end{claimproof}

It follows now from \eqref{clm:Favoiding}, \eqref{clm:damiabilityP3} and \eqref{clm:damiabilityIS} that, for every $d \geq 6$ and every $k \geq 1$, if $G$ is a $kP_3$-free graph and $F \subseteq V(G)$, then the family $\Lambda_k^d(G,F)$ is indeed an $F$-avoiding distance-$d$ amiable family. It remains to show that $\Lambda_k^d(G,F)$ has size at most $|V(G)|^{O(k)}$ and that the running time of the algorithm $\Lambda_k^d$ is $|V(G)|^{O(k)}$. To this end, let
$$f_d(n,k) = \max \{|\Lambda_k^d(G,F)|: |V(G)| \leq n, \ F \subseteq V(G) \text{ and } G \text{ is } kP_3\text{-free}\}.$$ Clearly, $f_d(n,1) = 1$ for every $n \in \mathbb{N}$. We claim that, for every $n \in \mathbb{N}$ and $k > 1$,
$f_d(n,k) \leq 2n^4\cdot f_d(n,k-1)$. Indeed, for every $n$-vertex $kP_3$-free graph $G$ and every $F \subseteq V(G)$, a member of $\Lambda_k^d(G,F)$ can only be created during an $i$-th iteration of the main loop (for some $i \in [n]$) in one of the inner loops from an induced $P_3$ of $G_i$ and some set resulting from a call to $\Lambda_{k-1}^d$. Since for each $i \in [n]$ there are at most $i^3$ such copies of $P_3$ in $G_i$, at most $2i^3 \cdot f_d(n,k-1)$ new members are added in the $i$-th iteration of the main loop. It follows that 
$f_d(n,k) \leq \sum_{i \in [n]} 2i^3 \cdot f_d(n,k-1) \leq 2n^4 \cdot f_d(n,k-1)$ and thus
$f_d(n,k) \leq 2^{k-1}n^{4(k-1)}$.

Similarly, for every $d \geq 6$ and every $n,k \in \mathbb{N}$, if $T_d(n,k)$ denotes the running time of the algorithm $\Lambda_k^d$ on an $n$-vertex $kP_3$-free graph, then clearly $T_d(n,1) = O(n)$. Furthermore, 
we obtain the following recurrence for $T_d(n,k)$, where we use the fact that checking if an $n$-vertex graph is $P_3$-free can be done in $O(n^3)$ time and determining if the connected components of an $n$-vertex graph are pairwise at distance at least $d$ can be done in $O(n^3)$ time (using an all-pair-shortest-path algorithm): \[T_d(n,k) \leq cn\cdot(f_d(n,k)\cdot n^3 + 2n^3\cdot(T_d(n,k-1) + f_d(n,k-1))) \leq 2cn^4 \cdot T_d(n,k-1) + O(n^{4k}),\] for some constant $c > 0$. We conclude that $T_d(n,k) \leq n^{O(k)}$. This completes the proof of \Cref{lem:dist-amiablefam}.
\end{proof}

We can finally prove \Cref{thm:distanceCol}, which we restate for convenience. The proof is similar to that of \Cref{thm:main}.

\distanceCol*

\begin{proof} Given a $kP_3$-free graph $G$ and an $r$-list assignment $L$ of $G$, we show that the following algorithm correctly determines, in polynomial time, whether $G$ admits a $(d, r)$-coloring that respects $L$. 

\medskip

\begin{enumerate}[label=\textbf{Step \arabic*.},ref=\arabic*,leftmargin=*]
	\item Compute a distance-$d$ amiable family $\mathcal{S}$ of $G$ of size $|V(G)|^{O(k)}$.
	\item For each $r$-tuple $(S_1,\ldots,S_r) \in \mathcal{S}^r$, find a maximum-size 
    induced subgraph $H$ of $G[\bigcup_{i \in [r]} S_i]$ which admits a $(d, r)$-coloring $\phi\colon V(H) \to [r]$ satisfying \ref{itm:property1} and \ref{itm:property2} of \autoref{lem:good-graphs}.
	\item Return \textsc{Yes}, if $V(G) \subseteq V(H)$, and \textsc{No} otherwise.
\end{enumerate}

\medskip

We first show that the algorithm is correct. To see this, simply observe that if $G$ admits a $(d, r)$-coloring $\phi\colon V(G) \to [r]$ that respects $L$, i.e., $\phi(v) \in L(v)$ for every $v \in V(G)$, then each color class $C_i = \{v \in V(G) \colon \phi(v) = i\}$ is a distance-$d$ independent set of $G$. Hence, since $\mathcal{S}$ is a distance-$d$ amiable family of $G$, for every $i \in [r]$ there exists $S_i \in \mathcal{S}$ such that $C_i \subseteq S_i$. The correctness of the algorithm then follows from \autoref{lem:dist-amiablefam} and \autoref{lem:good-graphs}.

Consider now the running time. By \autoref{lem:dist-amiablefam}, a distance-$d$ amiable family $\mathcal{S}$ of $G$ of size $|V(G)|^{O(k)}$ as in Step $1$ can be computed in time $|V(G)|^{O(k)}$. Observe now that, in Step $2$, the number of $r$-tuples to consider is at most $|\mathcal{S}|^{r} = |V(G)|^{O(rk)}$. Moreover, for each fixed $r$-tuple, a maximum-size
induced subgraph $H$ of $G[\bigcup_{i \in [r]} S_i]$ with the desired properties can be computed in time 
$O(((r+1)|V(G)|)^{3})$
thanks to \Cref{lem:good-graphs}. Therefore, the total running time is $|V(G)|^{O(rk)}$.
\end{proof}

We finish this section with the following remark about extending \Cref{thm:distanceCol} to the max-weight setting. 
Since distances may change in induced subgraphs, there are two ways to define \textsc{Max-Weight} \textsc{List $(d,r)$-Colorable Induced Subgraph}: one in terms of the distances in the original graph (our present algorithm could be tweaked to solve this version of the problem), one in terms of the distances in the induced subgraph (a version which arguably makes more sense than the first). However, in the latter case, the straightforward extension of our approach would essentially require computing a distance-$d$ amiable family for every induced subgraph, which is computationally prohibitive.


\section{Concluding remarks}

In this paper we made considerable progress toward complexity dichotomies on $H$-free graphs for \textsc{Max-Weight List $r$-Colorable Induced Subgraph}, its special cases \textsc{Max-Weight $r$-Colorable Induced Subgraph} and \textsc{Odd Cycle Transversal}, and the distance-$d$ generalizations of \textsc{Max-Weight Independent Set} and \textsc{List $r$-Coloring}. Closing the remaining gaps is a natural problem. In particular, we state two open problems which we think deserve further attention. 

We showed in \Cref{thm:main} that, for every $r,k \in \mathbb{N}$, \textsc{Max-Weight List $r$-Colorable Induced Subgraph} is polynomial-time solvable on $kP_3$-free graphs. Paired with known results from the literature (see \Cref{thm:literature-oct}), this leaves the case of $H=k_4P_4+k_3P_3+k_2P_2+k_1P_1$ (where $k_4 \geq 1$ and $k_4 + k_3 \geq 2$) as the \textit{only} remaining open case toward a complete complexity dichotomy for \textsc{Odd Cycle Transversal} on $H$-free graphs. In particular, we ask the following.

\begin{question}\label{q1}
   Is \textsc{Odd Cycle Transversal} polynomial-time solvable on $(P_4+P_3)$-free graphs?
\end{question}

We note that a positive answer to \Cref{q1} would require an approach different from that for $kP_3$-free graphs we provided. Indeed, since \textsc{Max-Weight List $5$-Colorable
Induced Subgraph} is $\mathsf{NP}$-hard on $(P_4 + P_2)$-free graphs \cite{couturier2015list}, we should not expect to efficiently find amiable families even in $(P_4+P_2)$-free graphs. 

We conclude with a question related to \textsc{Max-Weight Distance-$d$
Independent Set} on $kP_3$-free graphs. \Cref{thm:distanceIS} implies that the problem is polynomial-time solvable for every $d\geq 6$ and $k\in \mathbb{N}$. Moreover, Eto et al.~\cite{EGM14} showed that when $d=5$ and $d = 3$ the problem is $\mathsf{NP}$-hard on $2P_3$-free graphs and $2P_2$-free graphs, respectively. This leaves the case $d=4$ as the \textit{only} remaining open case toward a complexity dichotomy.

\begin{question}\label{q2}
	Let $k \in \mathbb{N}$. Is \textsc{Max-Weight Distance-$4$
Independent Set} polynomial-time solvable on $kP_3$-free graphs?
\end{question}

Showing that efficiently computable distance-$4$ amiable families exist for $kP_3$-free graphs would provide a positive answer to \Cref{q2}. However, such existence seems unlikely.


\bibliographystyle{abbrv}
\bibliography{ref}

\end{document}